
\documentclass[11pt]{article}

\usepackage{amsmath,amssymb,amsfonts,dsfont}

\usepackage{enumerate}
\usepackage{graphicx,psfrag}
\usepackage{tikz}
\graphicspath{ {./figure/} }



\usepackage{amsthm}
\newtheorem{theorem}{Theorem} 
\newtheorem{lemma}[theorem]{Lemma} 




\newtheorem{conj}[theorem]{Conjecture}

\newcommand{\R}{\mathbb{R}}

\newcommand{\N}{\mathbb{N}}
\newcommand{\Z}{\mathbb{Z}}


\newcommand{\0}{\mathbf{0}}
\newcommand{\xx}{\mathbf{x}}

\newcommand{\hh}{\mathbf{h}}



\newcommand{\ud}{\,\mathrm{d}}

\newcommand{\kcost}{c} 

\newcommand{\Reff}{R_{\rm eff}}
\newcommand{\Rave}{R_{\rm ave}}
\newcommand{\Raveoint}{\subscr{\mathring{R}}{ave}}
\newcommand{\Raveoext}{\subscr{\overline{R}}{ave}}
\newcommand{\Raveb}[1]{\Rave \l(#1\r)}
\newcommand{\Ravebo}[1]{\subscr{\mathring{R}}{ave}\l(#1\r)}

\newcommand{\Ravehydro}[1]{ \gamma\l(#1\r) }

\newcommand{\TMd}{T_{M^d}}
\newcommand{\TMm}{T_{M^m}}
\newcommand{\Muno}{{M_{1}}}
\newcommand{\Mdue}{{M_{2}}}
\newcommand{\TMuno}{T_\Muno}
\newcommand{\TLuno}{T_\Mdue}
\newcommand{\TML}{T_{\Muno,\Mdue}} 

\newcommand{\TM}{T_M}
\newcommand{\TMdue}{ T_{M^2} }
\newcommand{\TMtre}{ T_{M^3}  }
\newcommand{\TMquattro}{ T_{M^4}  }

\newcommand{\Hyp}[1]{H_{#1}}

\newcommand{\AAA}{A}
\newcommand{\BBB}{B}
\newcommand{\CCC}{C}
\newcommand{\Ok}{ \mathcal{I}^A_d}
\newcommand{\BZk}{\mathcal{I}^B_d}
\newcommand{\Sk}{ \mathcal{I}^C_d}




    \newcommand{\fromto}[2]{\{#1,\dots, #2\}}
    
    \newcommand{\subscr}[2]{#1_{\textup{#2}}}
    
    \newcommand{\setdef}[2]{\{#1 \, : \; #2\}}
    \newcommand{\map}[3]{#1: #2 \rightarrow #3}

    \renewcommand{\l}{\left}
    \renewcommand{\r}{\right}

\setlength\fboxsep{0pt}
\setlength\fboxrule{0.0pt}

\newlength{\figtriml}
\setlength{\figtriml}{44.5mm}
\newlength{\figtrimb}
\setlength{\figtrimb}{108.5mm}
\newlength{\figtrimr}
\setlength{\figtrimr}{40mm}
\newlength{\figtrimt}
\setlength{\figtrimt}{90mm}

\newlength{\figtrimbX}
\setlength{\figtrimbX}{106.1mm}
\newlength{\figtrimtX}
\setlength{\figtrimtX}{92.4mm}

\newlength{\figwidth}
\setlength{\figwidth}{.75\textwidth}   
 
\newlength{\figwidthcap}
\setlength{\figwidthcap}{.9\textwidth}

\newlength{\figwidthduo}
\setlength{\figwidthduo}{.49\textwidth}    
    

\title{Average resistance of toroidal graphs%
\thanks{%
The authors wish to thank F.~Garin for fruitful conversations on the topics of this paper.
Their work has been partly supported by the Italian Ministry MIUR under grant PRIN-20087W5P2K.}
}

\author{Wilbert Samuel Rossi%
	\thanks{W. S. Rossi is with Department of Applied Mathematics, University of Twente, 7500 AE Enschede, The Netherlands;
	{\tt\small w.s.rossi@utwente.nl}.}
\and Paolo Frasca%
	\thanks{P. Frasca is with Department of Applied Mathematics, University of Twente, 7500 AE Enschede, The Netherlands;
	{\tt\small p.frasca@utwente.nl}.} 
\and Fabio Fagnani%
	\thanks{F. Fagnani is with Dipartimento di Scienze Matematiche, 
	Politecnico di Torino, corso Duca degli Abruzzi 24, 10129 Torino, Italy; 
	{\tt\small fabio.fagnani@polito.it}.}%
}

\begin{document}
\maketitle

\begin{abstract}
	\noindent  The average effective resistance of a graph is a relevant performance index in many applications, including distributed estimation and control of network systems. In this paper, we study how the average resistance depends on the graph topology and specifically on the dimension of the graph. We concentrate on $d$-dimensional toroidal grids and we exploit the connection between resistance and Laplacian eigenvalues. Our analysis provides tight estimates of the average resistance, which are key to study its asymptotic behavior when the number of nodes grows to infinity. In dimension two, the average resistance diverges: in this case, we are able to capture its rate of growth when the sides of the grid grow at different rates. In higher dimensions, the average resistance is bounded uniformly in the number of nodes: in this case, we conjecture that its value is of order $1/d$ for large $d$. We prove this fact for  hypercubes and when the side lengths go to infinity.
\end{abstract}


\section{Introduction}  \label{sec:intro}

	The effective resistance between nodes of a graph is a classical fundamental concept that naturally comes up when 
	the graph is interpreted as an electrical network. For several decades, it has been known 
	to play a key role in the theory of  time-reversible Markov chains, because of its connections with 
	escape probabilities and commute times~\cite{PGD-JLS:84, AKC-PR89, AF:94,LPW:08}. 
	More generally, the notion of effective resistance has broad application in science: 
	%
	%
	in chemistry, for instance, the total effective resistance (summed over all pairs of nodes) 
	is known as the Kirchhoff index of the graph, where the graph of interest has the atoms as 
	nodes and their bonds as edges. 
	This classical index is linked to the properties of organic macromolecules~\cite{DB-EJM-AHD:02} 
	and to the vibrational energy of the atoms: the latter property has also been interpreted as a 
	measure of vulnerability in complex networks~\cite{EE-NH:10}.

\subsection*{Effective resistance in network systems}
	Recently, the average effective resistance of a graph has appeared as an important performance index in several 
	network-oriented problems of control and estimation, where the nodes (or agents) collectively 
	need to obtain estimates of given quantities with limited communication effort. 
	One instance is the {\em consensus} problem, where a set of agents, each with a scalar value, has the goal is to reach a common state that is a weighted average of the initial values. This problem can be solved by a simple linear iterative algorithm, which has 
	become very popular. The performance of this algorithm depends on the graph representing the 
 communication between the agents and the average effective resistance 
	of this graph plays a key role~\cite{RC-FG-SZ:09,FG-LS:11,EL-FG-SZ:13}. 
	Indeed, the average resistance determines both the convergence speed during the transient~\cite[Section~3.4]{AG-SB-AS:08}~\cite{FG-SZ:11} and the robustness against additive noise affecting the updates~\cite{LX-SB-SJK:07}: in the latter case,  the effective resistance of the graph is proportional to	 the mean deviation of the states from their average when time goes to infinity.
Similar issues of robustness to disturbances for network systems, such as platooning of vehicles, have attracted much interest~\cite{BB-MJ-PM-SP:12}. 

Another relevant problem is the {\em relative estimation} problem: each node is endowed with a value and these values have to be estimated by using noisy measurements of differences taken along the available edges. The expected error of the least-squares estimator is proportional to the average effective resistance of the graph~\cite{PB-JPH:08}. 
This estimation problem arises in several applications, ranging from clock synchronization~\cite{JE-RMK-CHP-SS:04,AG-PRK:06} to self-localization of mobile robotic networks~\cite{PB-JPH:09} and to statistical ranking from pairwise comparisons~\cite{AL-CDM:12,BO-CB-SO:14}. Several distributed algorithms that solve the relative estimation problem have been recently studied~\cite{PB-JPH:07,WSR-PF-FF:12,WSR-PF-FF:13,RC-AC-LS-MT:13,CR-PF-RT-HI:13c}.

In all the above situations, performance improves when the effective resistance is reduced. This observation motivates, for instance, the problem of allocating edge weights on a the edges of given graph in order to minimize the average effective resistance~\cite{AG-SB-AS:08}. Similarly, it motivates our interest in topologies ensuring small average resistance. More precisely, we consider families of graphs and we ask whether the average resistance depends gracefully on the size.

\subsection*{Effective resistance and graph dimension}

As we have argued, the average effective resistance of a graph is a relevant index in several problems. When one tries to understand the dependence of this index on the topology, it comes out that the notion of {\em dimension} of the graph plays an essential role. 
%
%
	It is well known~\cite{PB-JPH:07,BB-MJ-PM-SP:12} that in grid-like graphs of dimension $d$ and size $N$ 
	(the cardinality of the set of vertices), the average effective resistance $\Rave$ scales%
	\footnote{Given two sequences $\map{f,g}{\N}{\R^+}$, let $\ell^+=\limsup_n f(n)/g(n)$ 
	and $\ell^-=\liminf_n f(n)/g(n)$.
	We write that $f=O(g)$ when $\ell^+<+\infty$; that 
	$f=o(g)$ when $\ell^+=0$; that $f \sim g$ when $\ell^+=\ell^-=1$, and $f=\Theta(g)$ when 
	$\ell^+,\, \ell^-\in (0,+\infty)$. Finally, we write $f=\Omega(g)$ when $g=O(f)$.} 
	in $N\to +\infty$ (and fixed $d$) as follows
	$$\Rave= \left\{\begin{array}{ll} \Theta(N)\;\;&d=1\\ \Theta(\ln N) \;\; &d=2\\ \Theta(1)\;\; &d\geq 3\end{array}\right.$$
Notwithstanding the history and the recent popularity of this problem, no estimate of the constants involved is available in the literature (except for the case $d=1$). Specially significant is the lack of this information when $d\geq 3$ because it is not clear, in particular, what is the behavior of $\Rave$ as a function of $d$ and for $d\to +\infty$.

In this paper, we concentrate on regular grids constructed on $d$-dimensional tori as a benchmark example. Their interest is motivated by the ability to intuitively capture the notion of dimension and by their nice mathematical properties: recent applications in network systems include~\cite{SK-EGF:10,RC-FF-AS-SZ:08, RC-FG-SZ:09, FG-SZ:11,BB-MJ-PM-SP:12}.
On such toroidal grids, we sharpen the above statements. 
Firstly, in dimension $d=2$ we compute the asymptotic proportionality constant and we provide tight estimates that allow us to study the asymptotic behavior when the grid sides are unequal. Secondly, in toroidal grids with $d \geq 3$ we show that, when the side lengths tend to infinity, the average effective resistance is of order $1/d$. In fact, we conjecture that the order $1/d$ is valid for finite side lengths too. 

Our analysis hinges on two facts: firstly, the average effective resistance can be computed using the eigenvalues of the Laplacian matrix associated to the graph; secondly, an explicit formula is available for the Laplacian eigenvalues of toroidal $d$-grids. Similar approaches have been taken elsewhere in the literature, namely in~\cite{WU:04} and in~\cite{BB-MJ-PM-SP:12}. The paper~\cite{WU:04} computes the effective resistances between pairs of nodes in $d$-dimensional grids by explicit formulas. Our work, instead, concerns estimates of average effective resistances in toroidal grids and their asymptotics for large $N$. The paper~\cite{BB-MJ-PM-SP:12} also estimates the average resistance for large $N$: in comparison, the novelty of our work resides in more accurate estimates of the quantities involved, which are essential to capture the features of high-dimensional and irregular grids.

\subsection*{Paper structure}
The rest of this paper is organized as follows. In Section~\ref{sec:prbm} we formally state our problem and we present and discuss our main results. Their detailed derivation is provided in Section~\ref{sec:eig-proofs}, which also contains a mean-field approximation of the average resistance in dimension $d$.	Finally, in Section~\ref{sec:concl} we draw some conclusions about our work and future research.

\section{Problem statement and main results}\label{sec:prbm}

We consider an undirected graph $G=(V,E)$, where $V$ is a finite set of vertices and $E$ is a subset 	of unordered pairs of distinct elements of $V$ called edges. We assume the graph to be connected and we think of it as an electrical network with all edges having unit resistance. Given two distinct vertices $u,v\in V$, the effective resistance between $u$ and $v$ is defined as follows. Let there be a unit input current at node $u$ and a unit output current at node $v$: using Ohm's and Kirchoff's law, a potential $W$ is then uniquely defined at every node (up to translation constants). We then define the effective resistance as $\Reff(u,v):=W_u-W_v$. 
Consequently, the average effective resistance of $G$ is defined as 
	\begin{align} \label{eq:rave-def}
		\Raveb{G}:=\frac{1}{2N^2}\sum\limits_{u,v\in V}\Reff(u,v),
	\end{align}
	where $N=|V|$ is the size of the graph. 

\subsection{Toroidal $d$-dimensional grids}	We now formally define the class of graphs we deal with.
	Consider the cyclic group $\Z_M$ of integers modulo $M$ and the product group $\Z_{M_1}\times\cdots\times\Z_{M_d}$. 
	Let $e_j\in\Z_{M_1}\times\cdots\times\Z_{M_d}$ be the vector with all $0$'s except $1$ in position $j$ and define
	$S=\{\pm e_j\,|\, j=1,\dots ,d\}$. 
	We define as the toroidal $d$-grid over $\Z_{M_1}\times\cdots\times\Z_{M_d}$ 
	the graph $T_{M_1,\dots ,M_d}=(\Z_{M_1}\times\cdots\times\Z_{M_d}, E_{M_1,\dots ,M_d})$ where
	$$E_{M_1,\dots ,M_d}:=\l\{\{(x_1,\dots ,x_d),(y_1,\dots ,y_d)\}\;\l\vert \; (x_1-y_1,\dots ,x_d-y_d)\in S\r.\r\}$$
	%
	In other words, we call toroidal $d$-grids those graphs where the vertexes are arranged on a Cartesian lattice in $d$ dimensions, which has sides of length $M_1, \ldots, M_d$ and has edges between any vertex and its $2d$ nearest neighbors, with periodic boundary conditions. The total size of the graph is $N = M_1\times\cdots\times M_d$. 
	%
In the special case $M_1=\dots =M_d$, i.e., when all the $M_i$ are equal to a specific $M$, we will use the notation $\TMd$ instead of $T_{M,\dots ,M}$.
%
In the special case when $M=2$, we actually obtain degenerate grids on $\Z_2^d$, which are called hypercubes of dimension $d$ and denoted by $\Hyp{d}$:
	%
	note that the size of $\Hyp{d}$ is $N=2^d$ and the degree of each vertex is $d$.
	%


\subsection{Asymptotic results}

	We start by recalling the simple case $d=1$, where the effective resistance can be directly computed. From the standard properties of series and parallel connections of resistors~\cite[pages 119--120]{LPW:08}, one can see that
$\Reff(v_0,v_0+l)=\frac{l(M-l)}{M}$ and thus
	\begin{align}
	    \Raveb{\TM} &= \frac{1}{2M} \sum_{l=1}^{M-1} \frac{l(M-l)}{M} 
		= \frac{M}{12} - \frac{1}{12M}. \label{eq:Reff1exact}
	\end{align}
	This formula leads to the asymptotic relation
	\begin{align*}
		\Raveb{\TM} \sim\frac{M}{12}\,\quad {\rm for}\; M\to +\infty.
	\end{align*}
	When $d \ge 2$, we prove in this paper that the following asymptotic relations hold.
	\begin{theorem}[Asymptotics]\label{thm:asymp}
		Let $\TMd$ be 
		the toroidal grid in $d \geq 2$ dimensions, with each side length being equal to $M$, 
		and let  $\Rave(\TMd)$ be its average effective resistance.
		Then,
		\begin{equation}\label{eq:Reff2}
			\Raveb{\TMdue}\sim\frac{1}{ 2 \pi}\ln M\quad{\rm for}\; M\to +\infty
		\end{equation}
		and
		\begin{equation}\label{eq:Reffd}
			\lim\limits_{M\to +\infty}\Raveb{\TMd}=\Theta\left( \frac{1}{d}\right)\quad{\rm for}\; d\to +\infty.
		\end{equation}
	\end{theorem}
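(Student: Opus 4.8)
\emph{Plan of proof.} The plan is to reduce both relations to an eigenvalue computation. The common starting point is the classical identity expressing the average resistance through the non-zero Laplacian eigenvalues: if $0=\lambda_1<\lambda_2\le\dots\le\lambda_N$ are the eigenvalues of the Laplacian of a connected graph on $N$ nodes, then $\Rave=\frac1N\sum_{i=2}^N\lambda_i^{-1}$ (which, for $d=1$, already reproduces \eqref{eq:Reff1exact} via the known value of $\sum_{k=1}^{M-1}(2-2\cos\frac{2\pi k}{M})^{-1}$). For $\TMd$ the underlying graph is a Cayley graph on $\Z_M^d$ whose Laplacian eigenvalues are known explicitly, being indexed by $\kk=(k_1,\dots,k_d)\in\Z_M^d$ as $\lambda_\kk=\sum_{j=1}^d\bigl(2-2\cos\frac{2\pi k_j}{M}\bigr)=4\sum_{j=1}^d\sin^2\frac{\pi k_j}{M}$. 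Everything thus reduces to estimating
\begin{equation*}
	\Raveb{\TMd}=\frac1{M^d}\sum_{\kk\in\Z_M^d\setminus\{\0\}}\frac{1}{4\sum_{j=1}^d\sin^2(\pi k_j/M)}.
\end{equation*}

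For $d=2$ I would identify $\Z_M$ with a symmetric range of integers and use that $\sin^2\frac{\pi k}{M}=\bigl(\frac{\pi k}{M}\bigr)^2\bigl(1+O((k/M)^2)\bigr)$ uniformly on that range. Fix a small $\varepsilon>0$ and split the sum according to whether $\|\kk\|_\infty>\varepsilon M$ or not. On the first block $\lambda_\kk\ge 4\sin^2(\pi\varepsilon)$, a positive constant depending only on $\varepsilon$, so that block contributes $O_\varepsilon(1)=o(\ln M)$. On the second block the expansion above gives, uniformly, $\lambda_\kk=\frac{4\pi^2}{M^2}\|\kk\|_2^2\,(1+O(\varepsilon^2))$, so the block equals $\frac{1+O(\varepsilon^2)}{4\pi^2}\sum_{0<\|\kk\|_\infty\le\varepsilon M}\|\kk\|_2^{-2}$. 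The last ingredient is the two-dimensional lattice estimate $\sum_{0<\|\kk\|_\infty\le R}\|\kk\|_2^{-2}=2\pi\ln R+O(1)$, obtained by comparison with $\int_{1\le|x|\le R}|x|^{-2}\ud x$. Taking $R=\varepsilon M$, then $M\to\infty$, then $\varepsilon\to0$, gives $\Raveb{\TMdue}/\ln M\to\frac1{2\pi}$, which is \eqref{eq:Reff2}.

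For $d\ge3$ I would first let $M\to\infty$: the displayed sum is a punctured Riemann sum for
\begin{equation*}
	\gamma(d):=\frac1{(2\pi)^d}\int_{[-\pi,\pi]^d}\frac{\ud\theta}{2\sum_{j=1}^d(1-\cos\theta_j)},
\end{equation*}
a convergent integral precisely because $d\ge3$ makes the $|\theta|^{-2}$ singularity at the origin integrable; convergence of the sum to $\gamma(d)$ needs only a routine bound on the terms with $\kk$ near $\0$, which are dominated by $M^{2-d}\sum_{0<\|\kk\|\le\delta M}\|\kk\|^{-2}=O(\delta^{d-2})$. To estimate $\gamma(d)$ I would write $a^{-1}=\int_0^\infty e^{-ta}\ud t$ and use Tonelli to factor the denominator:
\begin{equation*}
	\gamma(d)=\int_0^\infty\Bigl(\tfrac1{2\pi}\int_{-\pi}^\pi e^{-2t(1-\cos\theta)}\ud\theta\Bigr)^{d}\ud t=\int_0^\infty\bigl(e^{-2t}I_0(2t)\bigr)^{d}\ud t=\tfrac12\int_0^\infty g(s)^d\,\ud s,
\end{equation*}
with $I_0$ the modified Bessel function of the first kind and $g(s):=e^{-s}I_0(s)$. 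Now $g$ is strictly decreasing with $g(0)=1$ and $\ln g(s)=-s+O(s^2)$ near $0$ (from $\ln I_0(s)=\frac{s^2}{4}+O(s^4)$), so one can fix $s_0>0$ with $e^{-2s}\le g(s)\le e^{-s/2}$ on $[0,s_0]$ and $g(s)\le g(s_0)=:\rho<1$ for $s\ge s_0$. Splitting $\tfrac12\int_0^\infty g^d=\tfrac12\int_0^{s_0}g^d+\tfrac12\int_{s_0}^\infty g^d$, the first piece lies between $\tfrac12\int_0^{s_0}e^{-2ds}\ud s$ and $\tfrac12\int_0^{s_0}e^{-ds/2}\ud s$, both $\Theta(1/d)$, while the tail is at most $\rho^{\,d-3}\cdot\tfrac12\int_0^\infty g(s)^3\ud s=\rho^{\,d-3}\gamma(3)$, which is exponentially small. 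Hence $\gamma(d)=\Theta(1/d)$, which is \eqref{eq:Reffd}.

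\textbf{Main obstacle.} I expect the delicate part in the case $d=2$ to be making the two-scale splitting rigorous with the \emph{sharp} constant: one must establish the lattice asymptotics $\sum_{0<\|\kk\|_\infty\le R}\|\kk\|_2^{-2}=2\pi\ln R+O(1)$ with a uniform error, and then interchange the limits in $M$ and $\varepsilon$ with care, since a coarser estimate would only yield $\Raveb{\TMdue}=\Theta(\ln M)$ rather than the constant $\frac1{2\pi}$. In the high-dimensional case the Laplace-transform reduction is the clean step; the remaining care lies in the Bessel tail $g(s)\sim(2\pi s)^{-1/2}$, needed to guarantee $\gamma(3)<\infty$ so that the tail estimate closes, and in the uniform-in-$d$ control of the Riemann-sum limit. (Incidentally, the same argument gives the sharper $\gamma(d)\sim\frac1{2d}$; I would state only $\Theta(1/d)$ to avoid tracking the $O(s^2)$ terms in $\ln g$.)
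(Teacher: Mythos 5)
Your argument is correct, but it reaches Theorem~\ref{thm:asymp} by a genuinely different route than the paper, which derives both asymptotics as corollaries of non-asymptotic, finite-$M$ bounds (Theorems~\ref{thm:2-torus} and~\ref{thm:d-torus}). For $d=2$ the paper decomposes $\Raveb{\TML}$ into two one-dimensional ring contributions plus an interior double sum, and squeezes the latter between upper and lower Riemann sums of explicit integrals in polar coordinates; your $\varepsilon$-splitting combined with the lattice estimate $\sum_{0<\|\kk\|_\infty\le R}\|\kk\|_2^{-2}=2\pi\ln R+O(1)$ recovers the same sharp constant $\frac1{2\pi}$ with less bookkeeping, but yields no explicit error term and says nothing about rectangular tori $\TML$ with $\Muno\ne\Mdue$, which the paper's bounds cover. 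For $d\ge3$ the divergence is more substantial: the paper bounds the continuous approximation $\Ravehydro{d}$ (its Lemma~\ref{thm:ravehydro-d}) by partitioning $[0,\tfrac12]^d$ into three regions and estimating each with Gamma-function and binomial manipulations, obtaining $\frac1{4d}\le\Ravehydro{d}\le\frac4d$, and then controls the discrepancy between the sum and the integral for finite $M$ via a binomial expansion over the faces of the torus; you instead pass to the limit $M\to\infty$ first (which, as a bonus, actually establishes existence of the limit in~\eqref{eq:Reffd}, something the paper's sandwich argument only implicitly assumes) and then use the Laplace-transform factorization $\Ravehydro{d}=\tfrac12\int_0^\infty\bigl(e^{-s}I_0(s)\bigr)^d\ud s$. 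This Watson-integral technique is cleaner and strictly stronger where it applies: as you note, it gives $\Ravehydro{d}\sim\frac1{2d}$, sharper than the paper's constants and matching the numerics in Figure~\ref{fig:Rave-plots}; its cost is that it produces no finite-$M$ upper bound of the form in Theorem~\ref{thm:d-torus} and hence no leverage on the paper's Conjecture for fixed $M$. The two technical points you flag (uniformity in the $\varepsilon$--$M$ limit interchange, and $I_0(s)\sim e^s(2\pi s)^{-1/2}$ to make $\Ravehydro{3}<\infty$ and close the tail bound) are exactly the right ones and are routine to fill in.
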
%
	The relations (\ref{eq:Reff2}) and (\ref{eq:Reffd}) follow immediately from the estimates provided below in Theorems~\ref{thm:2-torus} 
	and~\ref{thm:d-torus}.
	Furthermore, we conjecture that the statement (\ref{eq:Reffd}) can be sharpened as follows.
	\begin{conj}
			$$\Raveb{\TMd}=\Theta\left(\frac{1}{d}\right)\quad{\rm for}\; d\to +\infty\,,\; M\;{\rm fixed.}$$
	\end{conj}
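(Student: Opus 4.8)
The plan is to recast $\Raveb{\TMd}$ as the expectation of a function of a sum of i.i.d.\ bounded random variables and then to combine the law of large numbers with a concentration estimate. Recall that for a connected graph $G$ with Laplacian eigenvalues $0=\lambda_1<\lambda_2\le\cdots\le\lambda_N$ one has $\Raveb{G}=\tfrac1N\sum_{i=2}^N\lambda_i^{-1}$, and that the Laplacian eigenvalues of $\TMd$ (for $M\ge3$, the genuinely non-degenerate case) are $\lambda_{\kk}=4\sum_{j=1}^d\sin^2(\pi k_j/M)$, $\kk=(k_1,\dots,k_d)\in\Z_M^d$. Therefore
\begin{align*}
\Raveb{\TMd}=\frac{1}{4M^d}\sum_{\kk\neq\0}\frac{1}{\sum_{j=1}^d\sin^2(\pi k_j/M)}=\frac14\,\E\!\left[\frac{\1_{\{S_d>0\}}}{S_d}\right],
\end{align*}
where $S_d:=\sum_{j=1}^d\sin^2(\pi K_j/M)$ and $K_1,\dots,K_d$ are i.i.d.\ uniform on $\Z_M$. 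Three elementary observations underpin the argument: each summand lies in $[0,1]$ and has mean $\tfrac12$, so $\E[S_d]=d/2$; the event $\{S_d>0\}$ coincides with $\{\kk\neq\0\}$, so $\mathbb{P}(S_d>0)=1-M^{-d}$; and on $\{S_d>0\}$ one has the deterministic floor $S_d\ge\delta_M:=\sin^2(\pi/M)>0$, because then at least one summand is bounded below by the smallest nonzero value a single term may take. (The degenerate case $M=2$ is the hypercube; the same scheme works there with $\lambda_{\kk}=2\,|\kk|$ and $|\kk|\sim\mathrm{Bin}(d,\tfrac12)$, and it is in any case already covered by Theorem~\ref{thm:d-torus}.)

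By the law of large numbers $S_d\approx d/2$, so I would in fact aim at the sharp asymptotics $\Raveb{\TMd}\sim\tfrac1{2d}$ as $d\to\infty$ with $M\ge3$ fixed (and $\sim 1/d$ for the hypercube), which implies the conjectured $\Theta(1/d)$. For the lower bound, Cauchy--Schwarz applied to $\1_{\{S_d>0\}}=S_d^{-1/2}\cdot S_d^{1/2}\1_{\{S_d>0\}}$ gives $\mathbb{P}(S_d>0)^2\le\E[S_d^{-1}\1_{\{S_d>0\}}]\,\E[S_d]$, that is $\Raveb{\TMd}\ge(1-M^{-d})^2/(2d)$. For the upper bound, fix $\epsilon\in(0,1)$ and split the expectation according to whether $S_d>(1-\epsilon)d/2$ or not: on the first event $S_d^{-1}\le 2/((1-\epsilon)d)$, while on the second event $S_d^{-1}\le\delta_M^{-1}$ and Hoeffding's inequality for bounded i.i.d.\ summands gives $\mathbb{P}(S_d\le(1-\epsilon)d/2)\le e^{-\epsilon^2 d/2}$, whence $\Raveb{\TMd}\le\tfrac1{2(1-\epsilon)d}+\tfrac1{4\delta_M}e^{-\epsilon^2 d/2}$. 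Multiplying both bounds by $d$ and letting $d\to\infty$ with $M,\epsilon$ fixed yields $\tfrac12\le\liminf d\,\Raveb{\TMd}\le\limsup d\,\Raveb{\TMd}\le\tfrac1{2(1-\epsilon)}$; sending $\epsilon\downarrow0$ closes the gap and gives $\Raveb{\TMd}\sim\tfrac1{2d}$, a fortiori $\Theta(1/d)$.

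The one genuinely delicate point is the contribution of the rare event that $S_d$ lies far below its mean, in particular the ``positive but tiny'' regime $0<S_d<(1-\epsilon)d/2$ where $S_d^{-1}$ is a priori unbounded: it is tamed by pairing the deterministic floor $S_d\ge\delta_M$ with the exponential tail estimate, and this is precisely where the hypothesis ``$M$ fixed'' enters, since the bound degrades like $\delta_M^{-1}=\Theta(M^2)$. I would expect the actual obstacle to surface only if one demands the $\Theta(1/d)$ behaviour with constants \emph{uniform in $M$}: that would call for either a tail bound tailored to the distribution of $\sin^2(\pi K/M)$, or a monotonicity statement of the form ``$\Raveb{\TMd}$ is non-decreasing in $M$ for $d\ge3$'', which would reduce the finite-$M$ problem to the limit $M\to\infty$ already settled by~(\ref{eq:Reffd}); such monotonicity looks plausible but I have no immediate proof of it.
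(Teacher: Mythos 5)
This statement is left as an \emph{open conjecture} in the paper: the authors prove the $\Theta(1/d)$ behaviour only after first letting $M\to+\infty$ (Theorem~\ref{thm:asymp}, via the continuous approximation $\Ravehydro{d}$ of Lemma~\ref{thm:ravehydro-d}) and in the degenerate case $M=2$ (Theorem~\ref{thm:hypercube}); for fixed $M\ge3$ and $d\to\infty$ the upper bound of Theorem~\ref{thm:d-torus} is useless, since its leading term $\frac{8}{d+1}(1+\frac1M)^{d+1}$ diverges exponentially in $d$. Your argument is therefore a genuinely different route, and as far as I can verify it is complete and correct --- indeed it settles the conjecture for every fixed $M\ge3$ and yields the sharper asymptotics $\Raveb{\TMd}\sim\frac{1}{2d}$. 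The ingredients all check out: the identity $2d-2\sum_i\cos(2\pi h_i/M)=4\sum_i\sin^2(\pi h_i/M)$ turns~\eqref{eq:rave-green} into $\frac14\,\E\bigl[\1_{\{S_d>0\}}S_d^{-1}\bigr]$ with $S_d$ a sum of $d$ i.i.d.\ $[0,1]$-valued variables of mean $\frac12$ (since $\frac1M\sum_{k}\sin^2(\pi k/M)=\frac12$ for $M\ge2$); the Cauchy--Schwarz step gives the lower bound $(1-M^{-d})^2/(2d)$, consistent with and slightly sharper than the paper's $1/(4d)$; and the upper bound works because on the dangerous event $\{0<S_d\le(1-\epsilon)d/2\}$ the reciprocal is deterministically capped by $\sin^{-2}(\pi/M)$ (as $\min_{k\neq 0}\sin^2(\pi k/M)=\sin^2(\pi/M)$) while Hoeffding makes that event exponentially unlikely. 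The resulting constant $\frac1{2d}$ matches the paper's numerical observation for $d\ge5$ and is consistent with the hypercube case, where the eigenvalues are $2|\kk|$ rather than $4|\kk|$ and one gets $\sim\frac1d$ instead. What the paper's Riemann-sum machinery buys that your concentration bound does not is explicit, non-asymptotic control for moderate $d$ and large or unequal side lengths (Theorems~\ref{thm:2-torus} and~\ref{thm:d-torus}); your error term degrades like $M^2e^{-\epsilon^2 d/2}$, so, as you correctly note, it does not give constants uniform in $M$. The only presentational caveats are to exclude $M=2$ from the main computation (as you do, since~\eqref{eq:eigenvalues} fails for the hypercube) and to state explicitly that $\E[S_d\1_{\{S_d>0\}}]\le\E[S_d]$ in the Cauchy--Schwarz step; neither is a gap.
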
%
	At the moment we can only prove such a result in the degenerate case $M=2$, 
	corresponding to a hypercube, where 
\begin{align}	\label{ReffHypAsy}
	\Raveb{\Hyp{d}} \sim \frac1d\qquad \textup{ as } d\to\infty.	
\end{align}

\begin{figure*}
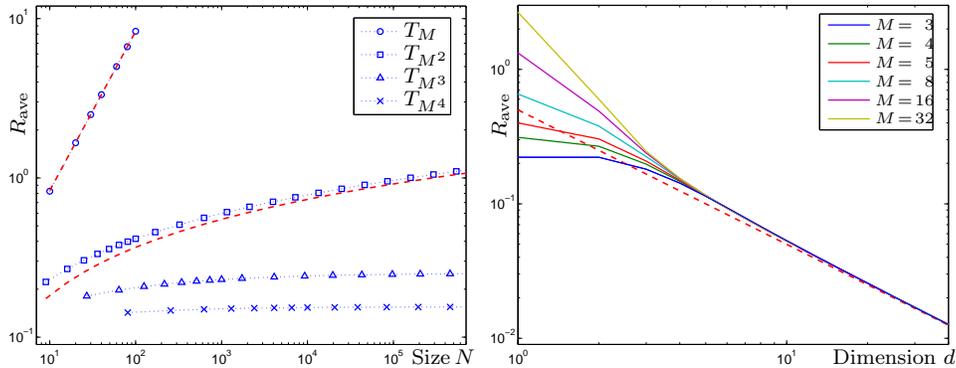

	\centering
	\fbox{%
		\psfrag{raAN}{\scriptsize $\TM$}
		\psfrag{ra2T}{\scriptsize $\TMdue$}
		\psfrag{ra3T}{\scriptsize $\TMtre$}
		\psfrag{ra4T}{\scriptsize $\TMquattro$}
		\psfrag{size}{\scriptsize Size \!$N$}
		\psfrag{rave}{\scriptsize $\Rave$}
		\includegraphics[width={\figwidthduo}, keepaspectratio=true]{093611_fig_low_dim.eps}%
	}\hspace*{2pt}
	\fbox{%
		\psfrag{Rave}{\scriptsize$\Rave$}
		\psfrag{d}{\!\scriptsize Dimension $d$}
		\psfrag{M=3}{\tiny $M\!=\!\phantom{0}3$}
		\psfrag{M=4}{\tiny $M\!=\!\phantom{0}4$}
		\psfrag{M=5}{\tiny $M\!=\!\phantom{0}5$}
		\psfrag{M=8}{\tiny $M\!=\!\phantom{0}8$}
		\psfrag{M=16}{\tiny $M\!=\!16$}
		\psfrag{M=32}{\tiny $M\!=\!32$}
		\includegraphics[width={\figwidthduo},	keepaspectratio=true]{093611_fig_hi_dim.eps}%
	%
	}%
	\caption{Left: $\Rave$ in low dimensional toroidal grids, as function of the size $N = M^d$, 
	with the dashed lines representing the asymptotic trends $N / 12$ and $\frac{1}{4\pi}\log N$. 
	Right: $\Rave$ in high dimensional toroidal grids, as function of the dimension $d$, 
	with the dashed line representing the trend $\frac{1}{2d}$. \label{fig:Rave-plots} }  
\end{figure*}

	Our results and conjecture are corroborated by numerical experiments, which are summarized in Figure~\ref{fig:Rave-plots}.
	The left plot of Figure~\ref{fig:Rave-plots} shows the average effective resistances $\Rave$ of four families 
	of low-dimensional graphs as functions of the total size $N$ of the graphs:
	$\Rave(\TM)$ and $\Rave(\TMdue)$ follow the predicted linear (\ref{eq:Reff1exact})
	and logarithmic (\ref{eq:Reff2}) asymptotic trends, whereas $\Rave( T_{M^3}) $ and $\Rave(T_{M^4})$ tend to a finite limit. 
	The right plot of Figure~\ref{fig:Rave-plots} instead regards high-dimensional graphs and shows that 
	$\Rave$ decreases with $d$, when the side lengths $M$ are kept fixed. 
	If $d \geq 5$, then $\Rave(\TMd)$ for different $M$ are roughly equal and inversely proportional to $2d$.
	This plot supports our conjecture that $\Rave(\TMd)$ is of order $1/d$, independent of $M$.

\subsection{Estimates for finite toroidal grids}
This subsection contains tight estimates of the average resistance in dimension $d$. These novel results are key to obtain the asymptotic relations presented above.
We begin with a pair of estimates in dimension two.
\begin{theorem}[Torus $\TML$]\label{thm:2-torus}
 Let $\TML$ be the toroidal grid in two dimensions with side lengths $\Muno$ and $\Mdue$, 
		and let $\Rave(\TML)$ be its average effective resistance.
		Suppose $4\le \Muno\le \Mdue$. Then,
		\begin{align*}
			\Raveb{\TML}	&	\leq	\frac{1}{2\pi} \log \Mdue +\frac{1}{12}\frac{\Mdue}{\Muno}+ 1	\\
			\Raveb{\TML}	&	\geq	\max \l\{ \, \frac{1}{12} \frac{\Mdue}{\Muno} - \frac{1}{24}  \enspace ; \enspace
								\frac{1}{2\pi}\log \Muno  - \frac1{12}\frac{\Mdue}{\Muno}- \frac{1}{2}  \,\r\}.
		\end{align*}				
	\end{theorem}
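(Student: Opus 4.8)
The plan is to pass to Laplacian eigenvalues, isolate the one--dimensional (``axial'') part of the spectral sum in closed form, and reduce the genuinely two--dimensional part to a single sum by carrying out the inner summation exactly. For a connected graph on $N$ nodes with nonzero Laplacian eigenvalues $0<\lambda_1\le\cdots\le\lambda_{N-1}$ one has $\Rave=\frac1N\sum_{i\ge1}\lambda_i^{-1}$, and the Laplacian eigenvalues of $\TML$ are $\lambda_{\kk}=4\sin^2\frac{\pi k_1}{\Muno}+4\sin^2\frac{\pi k_2}{\Mdue}$ with $k_j\in\{0,\dots,M_j-1\}$, so that
\[
\Raveb{\TML}=\frac1{\Muno\Mdue}\sum_{\kk\ne\0}\frac1{4\sin^2\frac{\pi k_1}{\Muno}+4\sin^2\frac{\pi k_2}{\Mdue}}.
\]
I would first peel off the terms with $k_1=0$: they sum to $\frac1{\Muno\Mdue}\sum_{k_2=1}^{\Mdue-1}(4\sin^2\frac{\pi k_2}{\Mdue})^{-1}=\frac{\Mdue^2-1}{12\,\Muno\Mdue}$, by the classical identity $\sum_{k=1}^{M-1}\sin^{-2}\frac{\pi k}{M}=\frac{M^2-1}3$ (the same one underlying \eqref{eq:Reff1exact}); this is the term $\frac1{12}\frac{\Mdue}{\Muno}$ appearing in both bounds, up to $O(\frac1{\Muno\Mdue})\le\frac1{24}$. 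The terms with $k_1\ne0,\ k_2=0$ are nonnegative and sum to at most $\frac1{12}$, so they can be discarded in the lower bound and estimated trivially in the upper bound. Everything then rests on the part of the sum with $k_1\ne0$ and $k_2\ne0$.

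For the sum over $k_1\ne0$ I would perform the $k_2$--summation in closed form. Fixing $k_1\ne0$ and putting $a=4\sin^2(\pi k_1/\Muno)>0$, partial fractions over the $\Mdue$--th roots of unity give
\[
\sum_{k_2=0}^{\Mdue-1}\frac1{a+4\sin^2(\pi k_2/\Mdue)}=\frac{\Mdue}{\sqrt{a(a+4)}}\,\coth\!\l(\tfrac{\Mdue}{2}\ln\rho\r),\qquad\rho=1+\tfrac a2+\tfrac12\sqrt{a(a+4)}.
\]
Since $e^{2x}-1\ge2x$, one has $1\le\coth x\le1+\frac1x$, so the right-hand side lies between $\frac{\Mdue}{\sqrt{a(a+4)}}$ and $\frac{\Mdue}{\sqrt{a(a+4)}}+\frac2{\sqrt{a(a+4)}\,\ln\rho}$. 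Summing over $k_1$ this yields, with $a_1=4\sin^2\frac{\pi k_1}{\Muno}$ and $\rho_1=1+\tfrac{a_1}2+\tfrac12\sqrt{a_1(a_1+4)}$,
\[
\Raveb{\TML}=\frac{\Mdue^2-1}{12\,\Muno\Mdue}+\frac1{\Muno}\sum_{k_1=1}^{\Muno-1}\frac{\coth(\tfrac{\Mdue}{2}\ln\rho_1)}{\sqrt{a_1(a_1+4)}},
\]
and the $\coth$--correction $\frac2{\Muno\Mdue}\sum_{k_1}(\sqrt{a_1(a_1+4)}\,\ln\rho_1)^{-1}$ is $O(1)$: from $\ln\rho_1\ge\ln(1+\sqrt{a_1})\ge\frac13\sqrt{a_1}$ and $\sum_{k_1}a_1^{-1}=\frac{\Muno^2-1}{12}$ it is at most $\frac{\Muno}{4\Mdue}\le\frac14$. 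So the whole problem reduces to estimating $\frac1{\Muno}\sum_{k_1=1}^{\Muno-1}(a_1(a_1+4))^{-1/2}$.

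Since $\sqrt{a_1(a_1+4)}=4\sin\frac{\pi k_1}{\Muno}\sqrt{1+\sin^2\frac{\pi k_1}{\Muno}}$, this equals $\frac1{4\Muno}\sum_{k_1}\csc\frac{\pi k_1}{\Muno}\,(1+\sin^2\frac{\pi k_1}{\Muno})^{-1/2}$, and the constant $\frac1{2\pi}$ comes out of it by comparison with the primitive $\int\csc(\pi x/M)\ud x=\frac M\pi\ln\tan\frac{\pi x}{2M}$. For the upper bound I bound $(1+\sin^2)^{-1/2}\le1$ and use convexity of $x\mapsto\csc(\pi x/M)$ on $(0,M)$: by Jensen $\sum_{k=1}^{M-1}\csc\frac{\pi k}{M}\le\int_{1/2}^{M-1/2}\csc\frac{\pi x}{M}\ud x=\frac{2M}\pi\ln\cot\frac\pi{4M}\le\frac{2M}\pi\ln M+\frac{2M\ln2}\pi$; dividing by $4\Muno$ and bounding $\ln\Muno\le\ln\Mdue$ gives the $\frac1{2\pi}\ln\Mdue$ term. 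For the lower bound I may not discard $(1+\sin^2)^{-1/2}$ (this costs a spurious factor $\sqrt2$ on the leading term); instead $\frac1{\sqrt{a_1+4}}=\frac12(1+a_1/4)^{-1/2}\ge\frac12-\frac{a_1}{16}$, so the sum is at least $\frac1{4\Muno}\sum_{k_1}\csc\frac{\pi k_1}{\Muno}-\frac1{16\Muno}\sum_{k_1}\sqrt{a_1}$, where $\sum_{k_1}\sqrt{a_1}=2\sum_{k_1}\sin\frac{\pi k_1}{\Muno}=2\cot\frac\pi{2\Muno}=O(\Muno)$ makes the second term $O(1)$; a trapezoidal comparison $\sum_{k=1}^{M-1}\csc\frac{\pi k}{M}\ge\int_1^{M-1}\csc\frac{\pi x}{M}\ud x=\frac{2M}\pi\ln\cot\frac\pi{2M}\ge\frac{2M}\pi\ln M-O(M)$ then gives $\frac1{2\pi}\ln\Muno-O(1)$. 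Collecting the pieces and checking that for $4\le\Muno\le\Mdue$ all the accumulated $O(1)$ quantities fit inside the constants $1$, $\frac12$, $\frac1{24}$ yields the two stated inequalities.

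The main obstacle is the \emph{exact} constant $\frac1{2\pi}$: the elementary sandwich $\frac{2\theta}\pi\le\sin\theta\le\theta$ --- which already suffices to pin $\Rave$ down to a constant factor --- spoils the leading coefficient, so the exact closed form for the inner sum and the precise primitive $\int\csc=\frac M\pi\ln\tan$ for the outer one are both genuinely needed. A secondary, purely bookkeeping point is the behaviour near $\kk=\0$: for the few $k_1$ with $\frac{\Mdue}{2}\ln\rho_1=O(1)$ the factor $\coth$ is large, but the corresponding summand of the normalized spectral sum is nonetheless only $O(1)$, so it does no harm --- it just has to be allotted room inside the $O(1)$ error budget.
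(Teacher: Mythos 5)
Your argument is correct, and it takes a genuinely different route from the paper's. The paper also isolates the one--dimensional contributions (the terms with $k_1=0$ or $k_2=0$, evaluated via \eqref{eq:Reff1exact}), but it then treats the remaining double sum as a two--dimensional Riemann sum of $f(x,y)=(4-2\cos 2\pi x-2\cos 2\pi y)^{-1}$, majorizes/minorizes $f$ by radial functions and integrates in polar coordinates to extract the $\frac{1}{2\pi}\log$ term (an upper Riemann sum over a quarter--square for the upper bound, a lower Riemann sum over a symmetric punctured square for the lower bound). You instead carry out the $k_2$--summation exactly via partial fractions over the $\Mdue$-th roots of unity, arriving at the $\coth$ closed form, and reduce everything to the one--dimensional cosecant sum $\frac1{4\Muno}\sum_k\csc(\pi k/\Muno)$, compared with its primitive $\frac{\Muno}{\pi}\ln\tan\frac{\pi x}{2\Muno}$. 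I checked your inner--sum identity, the sandwich $1\le\coth x\le 1+1/x$, and the error estimates (in particular $\ln\rho_1\ge\frac13\sqrt{a_1}$ together with $\sum_{k_1}a_1^{-1}=\frac{\Muno^2-1}{12}$ does give the $\coth$--correction bound $\Muno/(4\Mdue)\le\frac14$, and the convexity direction in both the midpoint and trapezoid comparisons for $\csc$ is stated correctly); the accumulated constants fit comfortably inside $1$, $\frac12$ and $\frac1{24}$. What your route buys: because the inner summation is exact, the $\frac{1}{12}\frac{\Mdue}{\Muno}$ and $\frac{1}{2\pi}\log\Muno$ contributions appear \emph{additively} in the lower bound, so you in fact prove $\Raveb{\TML}\ge \frac{1}{12}\frac{\Mdue}{\Muno}+\frac{1}{2\pi}\log\Muno-O(1)$ with a small explicit constant, of which the paper's max--of--two--branches statement is a weakening; the upper bound likewise comes out with a smaller additive constant. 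What the paper's route buys: the radial majorant and the polar--coordinate integral comparison are dimension--agnostic, and the intermediate bound \eqref{eq:Rdotestim2} on $\Raveoint(\TML)$ is reused verbatim inside the proof of Theorem~\ref{thm:d-torus}, whereas the roots--of--unity reduction collapses only one coordinate at a time and would not by itself deliver the $d$--dimensional estimates.
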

	
	%
In order to understand the consequences of Theorem~\ref{thm:2-torus}, it is useful to fix specific relations between $\Mdue$ and $\Muno$ and study 
	the asymptotic behavior when the size $N = \Muno \times \Mdue$ of the graph tends to infinity.
	%
	Preliminarily, we observe that in the lower bound of Theorem~\ref{thm:2-torus}, 
	the former expression dominates when $\Muno$ and $\Mdue$ grow with different rates, 
	while the latter dominates when $\Muno$ and $\Mdue$ have the same rate of growth.
	We then consider the following three relations between $\Muno$ and~$\Mdue$. 
	\begin{enumerate}
		\item 	$\Muno=\kcost$, $\Mdue=N/\kcost$. Then,
			$$\frac{1}{12} \frac{N}{\kcost^2} - \frac{1}{24} \leq 
			   \Raveb{ T_{\kcost , N/\kcost}  } 
			   \leq \frac{1}{12} \frac{N}{\kcost^2} + \frac{1}{2\pi}\log N + 1 .$$
			In this case, $ \Raveb{ T_{\kcost , N/\kcost} } \sim\frac{N}{12c^2}$ as $N \to +\infty$: 
			we may interpret this linear growth as reminiscent of the one-dimensional case.	
		\item	$\Muno=\sqrt[\kcost ]{N}$, $\Mdue=\sqrt[\kcost ]{N^{\kcost -1}}$ with $\kcost >2$. Then,
			$$ \frac{1}{12} N^{\frac{\kcost -2}{\kcost } } - \frac{1}{24} \leq
				\Raveb{ T_{ \sqrt[\kcost ]{N} , \sqrt[\kcost ]{N^{\kcost -1}} } } 
				\leq \frac{1}{12} N^{\frac{\kcost -2}{\kcost }} + \frac{1}{2\pi} \frac{\kcost -1}{\kcost }\log N + 1. $$
			In this case
			$\Raveb{ T_{ \sqrt[\kcost ]{N} , \sqrt[\kcost ]{N^{\kcost -1}} } }\sim N^{\frac{c-2}c}/12$ as $N \to +\infty$,
			which is sub-linear and proportional to the ratio between $\Mdue$ and $\Muno$.
		\item	$\Muno=\sqrt{N/\kcost}$, $\Mdue=\sqrt{\kcost N}$ with $\kcost  = \frac{\Mdue}{\Muno}$. Then,
			$$\frac{1}{4\pi} \log N - \frac{\log \kcost }{4\pi}  - \frac{\kcost }{12} - \frac{1}{2} \leq 
			  \Raveb{ T_{ \sqrt{N/\kcost}  , \sqrt{\kcost N}  } }
			  \leq \frac{1}{4\pi}\log N + \frac{\kcost}{12}   +  \frac{\log \kcost}{4\pi} + 1 .$$
			In this case,  	
			$  \Raveb{ T_{ \sqrt{N/\kcost}  , \sqrt{\kcost N}  } } \thicksim  \frac{1}{4\pi}\log N $ as $N \to +\infty$. 
			That is, taking $\Muno$ proportional to $\Mdue$ makes 
			$\Raveb{\TML}$ grow logarithmically with $N$: this order of growth must be contrasted against the linear growth that characterizes one-dimensional graphs and against the two previous examples. In fact, this is the lowest asymptotic average effective resistance reachable by a bidimensional toroidal grid.  

	\end{enumerate}


		\bigskip
Next, we provide a pair of bounds valid when $d\ge3$:  for simplicity, we assume that the lengths along each of the $d$ dimensions are all equal to $M$.
	
	\begin{theorem}[Torus $\TMd$]\label{thm:d-torus}
 Let $\TMd$ be the toroidal grid in $d\ge 3$ dimensions, with each side length being equal to $M$, and let $\Rave(\TMd)$ be its average effective resistance. Provided $M\geq 4$, it holds that:
		\begin{align*}
			\Raveb{\TMd}	&\leq	\frac{8}{d+1} \l( 1 + \frac{1}{M} \r)^{d+1} 
										+ \frac{d}{4 M^{d-2}} \l( \frac{1}{3} + \frac{(d-1) \log M}{\pi} \r) \\
			\Raveb{\TMd}	&\geq 	\frac{1}{4 d}.
		\end{align*}
		%
	\end{theorem}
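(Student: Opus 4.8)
The plan is to rewrite the average resistance through the Laplacian spectrum and then use the explicit eigenvalues of $\TMd$. Recall (cf.\ Section~\ref{sec:eig-proofs}) that for a connected graph on $N$ nodes $\Raveb{G}=\frac1N\sum_{i\ge2}\lambda_i^{-1}$, where $\lambda_2,\dots,\lambda_N$ are the nonzero Laplacian eigenvalues; for $\TMd$ these are $\lambda_{\kk}=4\sum_{j=1}^d\sin^2(\pi k_j/M)$, $\kk\in\Z_M^d$, so $\Raveb{\TMd}=M^{-d}\sum_{\kk\ne\0}\lambda_{\kk}^{-1}$. The lower bound is then immediate: since $\TMd$ is $2d$-regular, $\sum_{\kk\ne\0}\lambda_{\kk}=\Traccia L=2dM^d$, and the Cauchy--Schwarz (equivalently, arithmetic--harmonic mean) inequality gives $\sum_{\kk\ne\0}\lambda_{\kk}^{-1}\ge(M^d-1)^2/(2dM^d)$, hence $\Raveb{\TMd}\ge\frac1{2d}(1-M^{-d})^2\ge\frac1{4d}$ for $M\ge4$, $d\ge3$ (indeed $(1-4^{-3})^2>\frac12$).

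For the upper bound I would partition $\Z_M^d\setminus\{\0\}$ according to the number $m$ of nonzero coordinates of $\kk$. By symmetry of $\lambda_{\kk}$ in the $k_j$, the contribution of each fixed support of size $m$ equals $S_m:=\sum_{k_1,\dots,k_m\in\Z_M\setminus\{0\}}\bigl(4\sum_{j=1}^m\sin^2(\pi k_j/M)\bigr)^{-1}$, and $\Raveb{\TMd}=M^{-d}\sum_{m=1}^d\binom{d}{m}S_m$. The contributions of $m=1,2$ are handled by the lower-dimensional results already proved: $S_1=M\,\Raveb{\TM}=(M^2-1)/12$ by~\eqref{eq:Reff1exact}, so $d\,S_1/M^d\le\frac13\cdot\frac{d}{4M^{d-2}}$; and since $2S_1+S_2=M^2\Raveb{\TMdue}$ we have $S_2\le M^2\Raveb{\TMdue}$, so Theorem~\ref{thm:2-torus} (applied with both side lengths equal to $M\ge4$) gives $S_2\le M^2(\frac1{2\pi}\log M+\frac{13}{12})$, producing the $\frac{(d-1)\log M}{\pi}\cdot\frac{d}{4M^{d-2}}$ term together with a remainder of order $d^2M^{-(d-2)}$.

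The heart of the argument is the bound on $\sum_{m\ge3}\binom{d}{m}S_m$. Using $\lambda^{-1}=\int_0^\infty e^{-t\lambda}\ud t$ one gets $S_m=\int_0^\infty(\phi_M(t)-1)^m\ud t$, where $\phi_M(t):=\sum_{k\in\Z_M}e^{-4t\sin^2(\pi k/M)}$, and I would rely on the estimate $\phi_M(t)-1\le M\,g(t)$ with $g(t):=\frac1{2\pi}\int_0^{2\pi}e^{-2t(1-\cos\theta)}\ud\theta$. Since $g(0)=1$, $g'(0)=-2$ (so $g(t)\le e^{-t}$ for $t$ up to some $t_0>0$) and $g(t)=\Theta(t^{-1/2})$ as $t\to\infty$, one obtains $\int_0^\infty g(t)^m\ud t\le c/(m+1)$ for $m\ge3$ and an absolute constant $c$ ($c=8$ suffices, with room to spare), hence $S_m\le cM^m/(m+1)$. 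The binomial identity $\frac1{m+1}\binom{d}{m}=\frac1{d+1}\binom{d+1}{m+1}$ then collapses the sum:
\[
\frac1{M^d}\sum_{m=3}^d\binom{d}{m}S_m\ \le\ \frac{c}{d+1}\sum_{i=0}^{d-3}\binom{d+1}{i}M^{-i}\ \le\ \frac{c}{d+1}\Bigl(1+\frac1M\Bigr)^{d+1},
\]
which is the stated main term; the order-$d^2M^{-(d-2)}$ leftover from $m=2$ is absorbed since it is dominated by the unused summand $\frac{8}{d+1}\binom{d+1}{3}M^{-(d-2)}$ on the right.

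I expect the decisive difficulty to be the pair of facts used in the third paragraph, and chiefly $\phi_M(t)-1\le Mg(t)$. The point is that $\phi_M(t)$ is exactly the $M$-point periodic Riemann sum of $\theta\mapsto e^{-2t(1-\cos\theta)}$, a function with nonnegative Fourier coefficients, so the naive comparison with $Mg(t)$ (which is $M$ times its mean) goes the \emph{wrong} way; one must instead bound the excess $\phi_M(t)-Mg(t)$ --- which vanishes at $t=0$ and tends to $1$ as $t\to\infty$ --- directly, handling bounded $t$ by a power-series estimate and large $t$ via the exponential decay $\phi_M(t)-1\le(M-1)e^{-4t\sin^2(\pi/M)}$ coming from the cycle's spectral gap. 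One also needs the sharp order $1/(m+1)$ for $\int_0^\infty g(t)^m\ud t$: a mere $O(1)$ bound would still show $\lim_{M\to\infty}\Raveb{\TMd}$ finite but would destroy the $1/d$ decay, so this is precisely where the $\Theta(1/d)$ scaling is produced.
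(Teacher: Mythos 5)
Your proposal is correct in substance and shares the paper's skeleton --- the decomposition of $\Z_M^d\setminus\{\0\}$ by support size, the identity $\frac1{m+1}\binom{d}{m}=\frac1{d+1}\binom{d+1}{m+1}$ to collapse the sum into $(1+1/M)^{d+1}$, and the one- and two-dimensional results for $m=1,2$ --- but it replaces the paper's central technical lemma by a genuinely different argument. The paper bounds $S_m/M^m=\Ravebo{\TMm}$ by the integral $\Ravehydro{m}$ (a Riemann-sum comparison) and then proves $\Ravehydro{m}\le 4/m$ geometrically, splitting $[0,\frac12]^m$ into three regions and using spherical coordinates and the Gamma function (Lemma~\ref{thm:ravehydro-d}); you instead pass to the Laplace transform, reducing everything to the one-dimensional profile $g(t)=e^{-2t}I_0(2t)$ and the estimate $\int_0^\infty g(t)^m\,\mathrm{d}t\le c/(m+1)$. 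These are in fact the same quantity --- by Fubini, $\Ravehydro{m}=\int_0^\infty g(t)^m\,\mathrm{d}t$ --- so your route buys a cleaner, dimension-free reduction at the cost of having to control $g$ near $0$ and at infinity, while the paper's multivariate computation is more explicit. Your lower bound (trace plus the arithmetic--harmonic mean inequality) also differs from the paper's term-by-term count, and is if anything slicker. Two remarks on the steps you leave as sketches. First, the inequality $\phi_M(t)-1\le Mg(t)$, which you single out as the decisive difficulty, has a one-line proof that you seem to miss: $\theta\mapsto e^{-2t(1-\cos\theta)}$ is decreasing on $[0,\pi]$ and increasing on $[\pi,2\pi]$, so each summand with $k\le M/2$ is at most $\frac{M}{2\pi}$ times the integral over the cell to its left and each with $k>M/2$ at most that over the cell to its right; these cells are disjoint once the $k=0$ term (the maximum) is removed. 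This is precisely the paper's ``lower Riemann sum'' step in factorized form, and it makes your proposed detour through power series and the spectral gap of the cycle unnecessary --- a detour that, as written, does not obviously close in the intermediate regime $1\ll t\ll M^2$. Second, the bound $\int_0^\infty g(t)^m\,\mathrm{d}t\le 8/(m+1)$ is true (it follows from the paper's $\Ravehydro{m}\le 4/m$, or from $g(t)\le e^{-t}$ on a fixed initial interval plus $g(t)=O(t^{-1/2})$ and the strict decay $g(t_0)<1$), but you should carry out the constants; everything else, including the absorption of the order-$d^2M^{-(d-2)}$ leftover from $m=2$ into the unused binomial terms, checks out.
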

	Notice that if $d\geq 3$ is fixed and $M$ diverges, then Theorem~\ref{thm:d-torus} yields
	$\Raveb{\TMd} =\Theta(1)$ as $M\to +\infty$.
	This fact is well-known: the difficulty here lies in finding a tight upper bound, which can reveal the dependence on $d$ and imply~\eqref{eq:Reffd}. 
	
%
%
%

		\medskip
We conclude the presentation of our main results with the relevant estimates for the hypercube, corresponding to the case $M=2$.
	\begin{theorem}[Hypercube]\label{thm:hypercube}
		Let $\Hyp{d}$ be a $d$-dimensional hypercube graph, and $\Rave(\Hyp{d})$ be its average effective resistance.
		When $d\ge2$, the following estimates hold:
		\begin{align*} 
			\frac12\frac{1}{d+1} \le \Raveb{\Hyp{d}} \leq \frac{2}{d+1}.
		\end{align*}
	\end{theorem}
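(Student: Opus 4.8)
The plan is to pass to the Laplacian spectrum of the hypercube and then estimate a single binomial sum. First I would invoke the standard spectral formula for the average resistance: for a connected graph $G$ on $N$ vertices with Laplacian $L$, writing $L^{+}$ for the Moore--Penrose pseudoinverse one has $\Reff(u,v)=(\mathbf{e}_u-\mathbf{e}_v)^{\top}L^{+}(\mathbf{e}_u-\mathbf{e}_v)$; summing over all ordered pairs $(u,v)$ and using $L^{+}\1=\0$ to kill the cross terms gives $\sum_{u,v}\Reff(u,v)=2N\,\Traccia(L^{+})$, so that from \eqref{eq:rave-def},
\[
	\Raveb{G}=\frac{1}{N}\sum_{i=2}^{N}\frac{1}{\lambda_i},
\]
where $0=\lambda_1<\lambda_2\le\cdots\le\lambda_N$ are the Laplacian eigenvalues listed with multiplicity. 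This is exactly the eigenvalue connection announced in the introduction, which I would take as given.

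Next I would use the well-known spectrum of the hypercube: $\Hyp{d}$ is the Cartesian product of $d$ copies of $K_2$, so each Laplacian eigenvalue is a sum of $d$ numbers each equal to $0$ or $2$; that is, the eigenvalue $2k$ occurs with multiplicity $\binom{d}{k}$ for $k=0,1,\dots,d$. Substituting this and $N=2^{d}$ into the formula above,
\[
	\Raveb{\Hyp{d}}=\frac{1}{2^{d}}\sum_{k=1}^{d}\binom{d}{k}\frac{1}{2k}=\frac{1}{2^{d+1}}\sum_{k=1}^{d}\frac{1}{k}\binom{d}{k}=:\frac{\sigma_d}{2^{d+1}},
\]
so the theorem reduces to sandwiching $\sigma_d$ between constant multiples of $2^{d+1}/(d+1)$.

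The algebraic core is the elementary identity $\frac{1}{k+1}\binom{d}{k}=\frac{1}{d+1}\binom{d+1}{k+1}$, which on summation gives $\sum_{k=1}^{d}\frac{1}{k+1}\binom{d}{k}=\frac{2^{d+1}-1}{d+1}-1$. For the upper bound I would use $\frac1k\le\frac{2}{k+1}$ for $k\ge1$, whence $\sigma_d\le 2\bigl(\frac{2^{d+1}-1}{d+1}-1\bigr)\le\frac{2^{d+2}}{d+1}$ and therefore $\Raveb{\Hyp{d}}\le\frac{2}{d+1}$. For the lower bound I would use $\frac1k\ge\frac1{k+1}$, so $\sigma_d\ge\frac{2^{d+1}-1}{d+1}-1=\frac{2^{d+1}-d-2}{d+1}$; since $2^{d}\ge d+2$ for every $d\ge2$, this gives $2^{d+1}-d-2\ge 2^{d+1}-2^{d}=2^{d}$, hence $\sigma_d\ge\frac{2^{d}}{d+1}$ and $\Raveb{\Hyp{d}}\ge\frac{1}{2(d+1)}$.

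I do not expect a real obstacle here: the computation is essentially one clean binomial identity plus two crude termwise estimates. The only point requiring care is the elementary inequality $2^{d}\ge d+2$ used for the lower bound, which fails for $d=1$ and holds with equality at $d=2$ — this is precisely why the statement is restricted to $d\ge2$. (The same identity $\Raveb{\Hyp{d}}=\sigma_d/2^{d+1}$, together with the concentration of the weights $\binom{d}{k}$ near $k=d/2$, also underlies the sharper asymptotics~\eqref{ReffHypAsy}.)
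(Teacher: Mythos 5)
Your proposal is correct and follows essentially the same route as the paper: the spectral formula \eqref{eq:rave-eig}, the hypercube eigenvalues $2k$ with multiplicity $\binom{d}{k}$, the identity $\frac{1}{k+1}\binom{d}{k}=\frac{1}{d+1}\binom{d+1}{k+1}$, and the termwise bounds $\frac{1}{k+1}\le\frac1k\le\frac{2}{k+1}$. The only (welcome) addition is that you make explicit the inequality $2^{d}\ge d+2$ for $d\ge2$, which the paper leaves implicit when passing from its lower bound $\bigl(1-\frac{d+2}{2^{d+1}}\bigr)\frac{1}{d+1}$ to the stated $\frac{1}{2(d+1)}$.
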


\section{Resistance and eigenvalues}\label{sec:eig-proofs}

We have seen in the previous section that the average effective resistance of the one-dimensional ring graph can be computed from the effective resistance between any pair of nodes. 
Indeed, in that case, effective resistances can be directly computed using simple properties of electrical networks. However, this approach is not viable for $d$-dimensional tori with $d\geq 2$. Instead, we can rely on the fact that for any graph $\Rave(G)$ can be expressed in terms of its Laplacian eigenvalues.  
	Given a graph $G$, the Laplacian of $G$, $L(G)\in\R^{V\times V}$ is the matrix defined by 
	$$L(G)_{uu}=|\{v\in V\,|\, \{u,v\}\in E\}|\,,\;\;  
	L(G)_{uv }=\left\{\begin{array}{ll} -1 \; 
	&{\rm if}\, \{u,v\}\in E\\ 0 \; &{\rm otherwise}\end{array}\right. u\neq v$$
	It is well known that its eigenvalues can be ordered to satisfy 
	$0=\lambda_1<\lambda_2\leq\cdots \leq \lambda_N$ and the following relation holds true~\cite[Eq.~(15)]{AG-SB-AS:08}
	\begin{align}\label{eq:rave-eig}
		\Raveb{G}=\frac{1}{N}\sum\limits_{i\geq 2}\frac{1}{\lambda_i}
	\end{align}
	 We are going to use (\ref{eq:rave-eig}) in order to prove our results\footnote{	Note that using the Laplacian eigenvalues and eigenvectors it is possible to compute the effective resistance 
	between any pair of nodes~\cite[Eq.~(11)]{WU:04}:
	\begin{align*}
		\Reff(v,u) = \sum\limits_{i\geq 2} \frac{1}{\lambda_i} \l| \psi_i(v) - \psi_i(u) \r|^2
	\end{align*}
	where $\psi_i(v)$ is the component $v$ of the eigenvector associated to the eigenvalue $\lambda_i$ of the Laplacian of $G$.
	Actually, from this formula and the definition of $\Rave(G)$ one easily deduces (\ref{eq:rave-eig}), which only requires the knowledge of the eigenvalues. }.
Indeed, the eigenvalues of the Laplacian can be exactly computed for the toroidal grid $T_{M_1,\dots ,M_d}$ using a discrete Fourier transform~\cite{FG-SZ:11}
	\begin{equation}\label{eq:eigenvalues}
		\lambda_{\bf h}=\lambda_{h_1,\dots ,h_d}=2d -2\sum\limits_{i=1}^d\cos\frac{2\pi h_i}{M_i}\,,\quad 
		{\bf h}= (h_1,\dots h_d)\in \Z_{M_1}\times\cdots\times\Z_{M_d}.
	\end{equation}
This formula leads to the following key expression
	\begin{equation}\label{eq:rave-green}
		\Raveb{ T_{M_1,\dots ,M_d} }=\frac{1}{M_1\cdots M_d}\sum\limits_{\hh\neq \0}
			\frac{1}{ 2d-2\sum\limits_{i=1}^d\cos \l( \frac{2\pi h_i}{M_i}\r) }
	\end{equation}
on which most of our derivations are based (excluding Section~\ref{sect:hypercube}). 

\subsection{Bounds for the $2$-torus $\TML$}

	We provide here the proof of the Theorem~\ref{thm:2-torus}. 
	As explained before, we resort to the Laplacian eigenvalues, which for $\TML$ read 
	$\lambda_{i,j} =  4-2\cos(2\pi i  / \Muno)-2\cos(2\pi j  / \Mdue)$ 
	with $i \in \{0,\ldots,\Muno-1\}$ and $j \in \{0,\ldots,\Mdue-1\}$.
	Hence, 
	\begin{align*}
		\Raveb{\TML}     
			= \frac{1}{\Muno\Mdue} \sum_{(i,j)\neq \0} \frac{1}{4-2\cos(2\pi i  / \Muno)-2\cos(2\pi j  / \Mdue)}.
	\end{align*}
	In order to estimate this quantity, we are going to interpret certain partial sums as upper/lower Riemann sums of suitable integrals, similarly to what is done in~\cite{BB-MJ-PM-SP:12}. However, it will be essential to single out some ``one-dimensional'' contributions to the overall sum. 
To this goal, we remind that
	$$ \Rave(\TM) = \frac{1}{M}\sum_{i\geq 1} \frac{1}{ 2 - 2\cos(2\pi i /M)},$$
	since the eigenvalues of $\TM$ are $\lambda_i = 2 - 2\cos(2\pi i /M)$ with $i \in \{0,\ldots,M-1\}$.

	\medskip
	\noindent
{\it Proof of Theorem~\ref{thm:2-torus}:} 
	 In order to prove the upper bound we rewrite $\Raveb{\TML}$ as
		\begin{equation}\label{eq:decompose1} 
			\Raveb{\TML} =\frac{1}{\Mdue} \Raveb{\TMuno} + \frac{1}{\Muno} \Raveb{\TLuno}+  \Raveoint(\TML) 
		\end{equation}
		where
		$$ \Raveoint(\TML) =\frac{1}{\Muno\Mdue} \sum_{i\neq0}^{}\sum_{j\neq0}^{} \frac{1}{\lambda_{i,j}}$$
		The first two terms in (\ref{eq:decompose1}) are easily bounded with the explicit formula (\ref{eq:Reff1exact}):
		\begin{align}\label{eq:exact1d}
			\frac{1}{\Mdue} \Raveb{\TMuno} + \frac{1}{\Muno} \Raveb{\TLuno} & \leq  \frac{\Muno}{12 \Mdue} + \frac{\Mdue}{12 \Muno}
		\end{align}
		Concerning $ \Raveoint(\TML)$, by symmetry it holds that:
		\begin{align*} 
			\Raveoint(\TML) & = \frac{1}{\Muno\Mdue} \sum_{i=1}^{\Muno-1}\sum_{j=1}^{\Mdue-1} \frac{1}{\lambda_{i,j}} 
			\leq \frac{4}{\Muno\Mdue} \sum_{i=1}^{\lfloor \Muno/2 \rfloor}\sum_{j=1}^{\lfloor \Mdue/2 \rfloor} \frac{1}{\lambda_{i,j}}.
		\end{align*}
		Consider the function 
		\begin{align}\label{eq:define-f}
			f(x,y) = \frac{1}{4-2\cos(2\pi x) - 2\cos(2\pi y)}  
		\end{align} 
		and notice that $\frac{1}{\lambda_{i,j}} = f\left(\frac{i}{\Muno},\frac{j}{\Mdue}\right)$.
		 For a fixed $\bar{y}$, $f$ is decreasing for $x\in (0,1/2]$, and viceversa for fixed $\bar{x}$,  
		$f$ is decreasing for $y\in (0,1/2]$. It follows that, for each pair $i,j$ 
		with $1 \leq i \leq \lfloor \Muno/2 \rfloor $ and $1 \leq j \leq \lfloor \Mdue/2 \rfloor $,
		\begin{align*}
			\frac{1}{\Muno\Mdue} \frac{1}{\lambda_{i,j} } \leq \int_{\frac{j-1}{\Mdue}}^{\frac{j}{\Mdue}} 
			\int_{\frac{i-1}{\Muno}}^{\frac{i}{\Muno}} f(x,y) \ud x \ud y.
		\end{align*}
		\begin{figure}
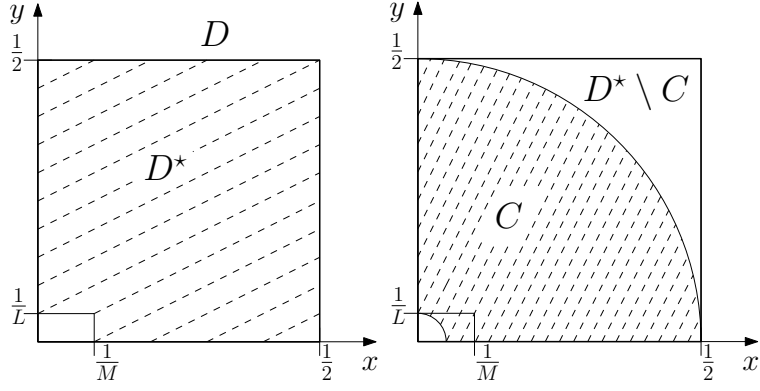

			\centering
			\includegraphics[width= 140pt ]{093611_fig_reg_1.eps}
			\includegraphics[width= 140pt ]{093611_fig_reg_2.eps}
			\caption{The regions $D$, $D^\star$, and $C$, which are useful in the proof of 
						 the upper bound of Theorem~\ref{thm:2-torus}. }
			\label{fig:region1}
		\end{figure}
		 Define the region $D =[0, 1/2]\times[0, 1/2]$ and  $D^\star =D \setminus \l([0, 1/\Muno]\times[0, 1/\Mdue]\r)$ 
		as in Figure~\ref{fig:region1}~(left),  to estimate 
		\begin{align}
			\Raveoint(\TML)&=\frac{4}{\Muno\Mdue} \sum_{i=1}^{\lfloor \Muno/2 \rfloor}
										\sum_{j=1}^{\lfloor \Mdue/2 \rfloor} \frac{1}{\lambda_{i,j}} \nonumber \\
				& \leq \frac{4}{\Muno\Mdue} f\left(\frac{1}{\Muno},\frac{1}{\Mdue}\right) 
							+ 4\iint_{D^\star} f(x,y) \ud x \ud y\label{eq:torus-riemann-sum}.
		\end{align}
		The term for $i=1,j=1$ is kept aside, because of the singularity in the origin. 
		%
		Next, instead of computing the integral in~\eqref{eq:torus-riemann-sum} in closed form, we observe that 
		\begin{align*}
				f(x,y) &= \frac{1}{4-2\cos(2\pi x)-2\cos(2\pi y)} \\
				&\leq \frac{1}{(2\pi x)^2 + (2\pi y)^2 - \frac{(2\pi x)^4}{12} - \frac{(2\pi y)^4}{12}} \\
				&\leq \frac{1}{(2\pi)^2 (x^2+y^2) - \frac{(2\pi)^4}{12} (x^2 + y^2)^2 }  = g(\sqrt{x^2 + y^2}),
		\end{align*}
		where we defined the function $\map{g}{(0,\frac{\sqrt{3}}{\pi})}{\R^+}$ as 
		\begin{equation}\label{eq:g-def}
		g(r)=\frac{1}{4\pi^2 r^2 \left( 1 - \frac{\pi^2}{3}r^2 \right)}.
		\end{equation}
		Unfortunately, $g$ does not provide an useful upper bound because it 
		has a singularity in $\frac{\sqrt{3}}{\pi}$.
		We instead use the following continuous modification
		\begin{align*}
		\tilde{g}{(\rho)} = \left\{ \begin{array}{ll}
			\frac{1}{4\pi^2 \rho^2 \left( 1 - \frac{\pi^2}{3}\rho^2 \right)}  & \text{if}\quad 0 < \rho < \frac{1}{2}\\
			\frac{1}{\pi^2 \left( 1 - \frac{\pi^2}{12}\right)} & \text{if}\quad\rho \geq \frac{1}{2},\\
		\end{array} \right.
		\end{align*}
		which is decreasing in $\l(0 , \frac{\sqrt{3}}{\sqrt{2} \pi}\r)$ and such that 
		$f(x,y) \leq \tilde{g}{\l( \sqrt{x^2 + y^2} \r)}$ for all $(x,y) \in D .$
		We now use this bound to estimate the right-hand side of (\ref{eq:torus-riemann-sum}). 
		Regarding the first term, using that $\Mdue\geq \Muno\geq 4$, we obtain
		\begin{equation}\label{eq:stima-g-tilde}
			\frac{4}{\Muno\Mdue}\,\tilde g\!\left(\sqrt{\frac{1}{\Mdue^2} + \frac{1}{\Muno^2}} \right) 
			\leq \frac{4}{\Muno\Mdue}\tilde g(1/\Muno)
			\leq \frac{2}{\pi^2}\frac{\Muno}{\Mdue}\,.
		\end{equation}
		On the other hand, defining $C=\setdef{(x,y)\in \R^2}{\frac1{\Mdue^2}\le x^2+y^2\le \frac14}$ 
		as illustrated in Figure~\ref{fig:region1}~(right), 
		we can estimate the second term with polar coordinates:	
		\begin{align}
			4 \iint_{D^\star} f(x,y) \ud x \ud y  
				&	= 	4 \iint_{D^\star} \tilde{g}(\rho) \rho \ud \rho \ud \theta  \nonumber \\
				&\leq	\displaystyle 4 \iint_C \tilde{g}\l(\rho\r)  \rho \ud \rho \ud \theta + 4\iint_{D^\star \setminus C}  
							\tilde{g}\l(\rho\r)  \rho \ud \rho \ud \theta  \nonumber \\
				&\leq	\displaystyle 4 \int_0^{\frac{\pi}{2}} \int_{\frac{1}{\Mdue}}^{ \frac{1}{2} } 
							\frac{1}{4\pi^2 \rho^2 \left( 1 - \frac{\pi^2}{3}\rho^2 \right)}  \rho \ud \rho \ud \theta 
							+ \left(1-\frac{\pi}{4} \right) \tilde{g}\left(\frac{1}{2}\right) \nonumber \\
				& \leq	\displaystyle\frac{2}{\pi^2} \frac{\Muno}{\Mdue}  
							+\frac{1}{2\pi} \int_{\frac{1}{\Mdue}}^{1/2} \frac{1}{\rho - \frac{\pi^2}{3}\rho^3} \ud \rho  
							+ \frac{1}{6} \nonumber \\
				&	=	\displaystyle\frac{1}{2\pi}\left[\log \rho
							- \frac{1}{2}\log\left(1-\frac{\pi}{3}\rho^2\right)\right]_{\frac{1}{\Mdue}}^{1/2}  + \frac{1}{6} \nonumber \\
				&\leq \displaystyle \frac{1}{2\pi} \log \Mdue - \frac{1}{4\pi}\log\left(1-\frac{\pi}{12}\right)   + \frac{1}{6} \nonumber\\
				&\leq	\displaystyle \label{eq:stima-integrale} \frac{1}{2\pi} \log \Mdue + \frac{1}{5}.
		\end{align}
		Using bounds (\ref{eq:stima-g-tilde}) and (\ref{eq:stima-integrale}) in (\ref{eq:torus-riemann-sum}) we obtain
		\begin{equation}\label{eq:Rdotestim2}
			\Raveoint(\TML) \leq \frac{1}{2\pi} \log \Mdue + \frac{2}{\pi^2}\frac{\Muno}{\Mdue}+ \frac{1}{5}
		\end{equation}
		Using now (\ref{eq:Rdotestim2}) and (\ref{eq:exact1d}) in (\ref{eq:decompose1}), we finally get
		\begin{align*}
			\Raveb{\TML} & \leq \frac{1}{2\pi} \log \Mdue + \frac{\Mdue}{12\Muno} 
				+ \left( \frac{2}{\pi^2}+\frac{1}{12}\right) \frac{\Muno}{\Mdue} + \frac{1}{5}
		\end{align*}
		and the thesis follows since $\frac{\Muno}{\Mdue}\leq 1$.
		
		The first estimate of the lower bound can be proved easily: it is enough to neglect in the expression of 
		$\Raveb{\TML}$ all terms that have $i > 0$ or $j > 0$. Then,
		\begin{align*}
			\Raveb{\TML}& \geq \frac{1}{\Mdue} \Raveb{\TMuno}	+ \frac{1}{\Muno} \Raveb{\TLuno} \\
							&= \frac{1}{\Mdue}\left(\frac{\Muno}{12} - \frac{1}{12\Muno}\right) 
							+ \frac{1}{\Muno}\left(\frac{\Mdue}{12} - \frac{1}{12\Mdue}\right) \\
						& \geq \frac{1}{12}\left( \frac{\Mdue}{\Muno} + \frac{\Muno}{\Mdue}\right) - \frac{1}{6\Mdue \Muno} 
						  \geq \frac{1}{12}\frac{\Mdue}{\Muno} - \frac{1}{24} 
		\end{align*}
		
		To prove the second estimate, we use an approach similar to that
		of the upper bound.
		 Since a symmetric domain is convenient, we define the index sets
		\begin{align*}
			\Gamma_+ &= \Z_\Muno \times \Z_\Mdue \quad \backslash \quad \{(0,0)\} \\ 
			\Gamma^+ &= \Gamma_+ \quad \cup \quad \{\Muno\} \times \{1,2,\ldots, \Mdue-1\}\quad 
			                           \cup \quad \{1,2,\ldots, \Muno-1\} \times \{\Mdue\}\quad
		\end{align*}
		 to write
		\begin{align}\label{eq:decompose2}
			\Raveb{\TML} & = \frac{1}{\Muno\Mdue} \sum_{\Gamma_+} \frac{1}{\lambda_{i,j}} 
			                 = \Raveoext(\TML)
			           -\frac{1}{\Mdue} \Raveb{\TMuno} -\frac{1}{\Muno} \Raveb{\TLuno}
		\end{align}
		where $\Raveoext(\TML)= \frac{1}{\Muno\Mdue} \sum_{\Gamma^+} \frac{1}{\lambda_{i,j}} $. 
		\begin{figure}
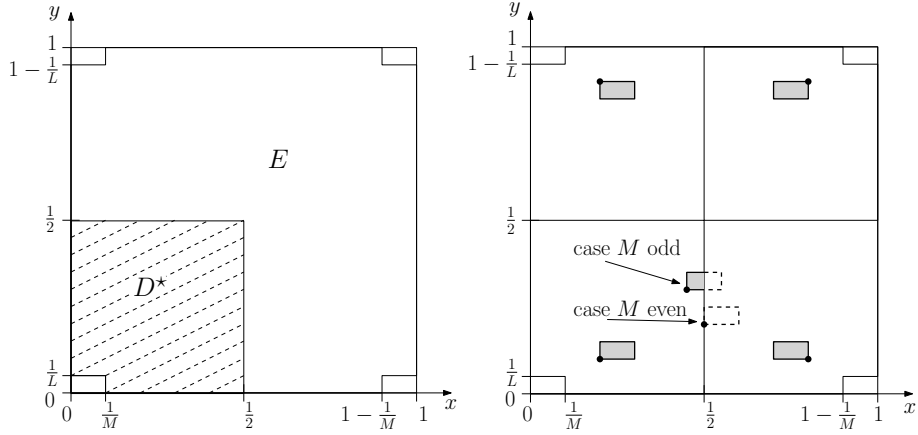

			\centering
			\includegraphics[width= 170pt ]{093611_fig_reg_3.eps}
			\includegraphics[width= 170pt ]{093611_fig_reg_4.eps}
			\caption{Left plot: Regions $E$ and $D^\star$. Right plot: 
			In order to illustrate how the Riemann sum is built, dots on the corners of the grey rectangles 
			indicate the interpolation points, whose values are assumed on each rectangle. The contributions 
			of the dashed parts of the rectangles are disregarded in the integral, without compromising the 
			validity of inequality~\eqref{eq:riemann-lower-torus}.\label{fig:region3}} 
		\end{figure}
		To estimate   $\Raveoext(\TML)$, we consider the function $f(x,y)$ as defined in the proof of 
	 the upper bound 
		and the domain $E$, defined 
		(Figure~\ref{fig:region3}) as:
		$$ E = [0,1]\times [0,1] \setminus \left(\l( \l[ 0 , \frac{1}{\Muno} \r] \cup \l[ 1 - \frac{1}{\Muno} , 1 \r] \r) \times 
		\l( \l[ 0 , \frac{1}{\Mdue} \r] \cup  \l[ 1 - \frac{1}{\Mdue} , 1 \r] \r)\right)\,,$$
		and we notice that
		\begin{align} \label{eq:riemann-lower-torus}
			\Raveoext(\TML) &\geq \iint_E f(x,y) \ud x \ud y
			= 4 \iint_{D^\star} f(x,y) \ud x \ud y
		\end{align} 
		where the equality exploits the symmetry of $f$.
		Since $f(x,y)\geq (4\pi^2)^{-1}(x^2+y^2)^{-1}$, we obtain
		\begin{align*}
			\Raveoext(\TML) & \geq \frac{1}{\pi^2} \iint_{D^\star} \frac{1}{x^2 + y^2} \ud x \ud y \\
			&\geq \frac{1}{2\pi} \int_\delta^{1/2} \frac{1}{\rho^2} \rho \ud \rho 
			= \frac{1}{2\pi}\left( \log( \delta^{-1}) -\log 2 \right)		,
		\end{align*}
		with $\delta = \sqrt{\frac{1}{\Muno^2} + \frac{1}{\Mdue^2}}$. 
		If we observe that $\frac{1}{\Muno^2} + \frac{1}{\Mdue^2} \leq \frac{2}{\Muno^2}$, we get
		\begin{align}\label{eq:raveestim}
			\Raveoext(\TML) &\geq \frac{1}{2\pi}\log(\Muno) - \frac{1}{4}, 
		\end{align} 
		Using now (\ref{eq:raveestim}) inside (\ref{eq:decompose2}) 
		together with the exact calculation (\ref{eq:Reff1exact}), we finally obtain
		\begin{align*}
			\Raveb{\TML} &\geq \frac{1}{2\pi}\log(\Muno)  - \frac{\Mdue}{12\Muno} - \frac{\Muno}{12\Mdue} - \frac{1}{4} 
							\geq \frac{1}{2\pi}\log(\Muno)  - \frac{\Mdue}{12\Muno} - \frac{1}{2}. 
		\end{align*}
		This inequality concludes the proof
		 of the second estimate for the lower bound, and hence the proof of the theorem.
	\hfill$\square$

\subsection{Continuous approximation of $\Rave(\TMd)$}
	We consider here the quantity $\Ravehydro{d}$, defined as:
	\begin{align} \label{eq:ravehydro-d}
	     \Ravehydro{d} := \int_{ \l[0, 1 \r]^d} \frac{1 }{2 d - 2\sum_{i=1}^d \cos(2\pi x_i)} \ud \xx. 
	\end{align}
	and prove an upper and lower bound of order $1/d$.
	 In the proof of Theorem~\ref{thm:d-torus}, this quantity will play the role of a ``continuous'' approximation of $\Rave(\TMd)$.
	
	\begin{lemma}\label{thm:ravehydro-d} 
		If $d \geq 3$, then
		\begin{align*}
			\frac1{4d}\le \Ravehydro{d}\le\frac{4}{d}. 
		\end{align*}
	\end{lemma}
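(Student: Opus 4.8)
The plan is to turn the $d$-dimensional integral $\Ravehydro d$ into a one-dimensional one by means of the identity $\frac1a=\int_0^\infty e^{-at}\ud t$ (valid for $a>0$), which linearises the reciprocal and lets the product structure of $[0,1]^d$ do the work. As a preliminary step I would rewrite the integrand using $2d-2\sum_{i}\cos(2\pi x_i)=4\sum_{i}\sin^2(\pi x_i)$, so that $\Ravehydro d=\int_{[0,1]^d}\bigl(4\sum_{i=1}^d\sin^2(\pi x_i)\bigr)^{-1}\ud\xx$. The lower bound is then immediate: each $\sin^2(\pi x_i)\le 1$, hence the integrand is pointwise at least $\tfrac1{4d}$, and integrating over the unit cube gives $\Ravehydro d\ge \tfrac1{4d}$.

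For the upper bound I would insert $\bigl(4\sum_i\sin^2(\pi x_i)\bigr)^{-1}=\int_0^\infty e^{-4t\sum_i\sin^2(\pi x_i)}\ud t$, apply Tonelli's theorem (the integrand is nonnegative and the set where $\sum_i\sin^2(\pi x_i)=0$ is negligible) to exchange the integrals, and factor the exponential across the coordinates, obtaining
\[
	\Ravehydro d=\int_0^\infty\phi(t)^d\ud t,\qquad \phi(t):=\int_0^1 e^{-4t\sin^2(\pi x)}\ud x .
\]
Everything now reduces to controlling the scalar function $\phi$, for which I would establish two elementary estimates. First, using $e^{-s}\le 1-s+\tfrac{s^2}2$ for $s\ge0$ together with $\int_0^1\sin^2(\pi x)\ud x=\tfrac12$ and $\int_0^1\sin^4(\pi x)\ud x=\tfrac38$, one gets $\phi(t)\le 1-2t+3t^2$, hence $\phi(t)\le 1-t\le e^{-t}$ for $0\le t\le\tfrac13$. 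Second, since $\sin(\pi x)$ is concave on $[0,1]$ it dominates its chord $2x$ on $[0,\tfrac12]$, so a Gaussian estimate gives, for every $t>0$, $\phi(t)=2\int_0^{1/2}e^{-4t\sin^2(\pi x)}\ud x\le 2\int_0^\infty e^{-16tx^2}\ud x=\tfrac{\sqrt\pi}{4\sqrt t}$.

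To conclude I would split $\int_0^\infty\phi(t)^d\ud t$ at $t=\tfrac13$. On $[0,\tfrac13]$ the exponential bound gives $\int_0^{1/3}\phi(t)^d\ud t\le\int_0^\infty e^{-dt}\ud t=\tfrac1d$; on $[\tfrac13,\infty)$ the power bound gives $\int_{1/3}^\infty\phi(t)^d\ud t\le\bigl(\tfrac{\sqrt\pi}4\bigr)^d\int_{1/3}^\infty t^{-d/2}\ud t=\tfrac{2}{3(d-2)}\bigl(\tfrac{3\pi}{16}\bigr)^{d/2}$, and since $\tfrac{3\pi}{16}<1$ and $\tfrac{2d}{9(d-2)}\le\tfrac23$ for $d\ge 3$, this tail is at most $\tfrac3d$. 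Adding the two contributions yields $\Ravehydro d\le\tfrac4d$, as desired. The only point that requires a moment's thought is that the crude bound $\phi\le 1$ on $[0,\tfrac13]$ is not enough — it would contribute a constant rather than a $1/d$ term — so one genuinely needs the exponential decay $\phi(t)\le e^{-t}$ near the origin; extracting it from the quadratic Taylor bound for $e^{-s}$ is what makes the estimate work, and everything else (the two trigonometric integrals and the two split integrals) is routine.
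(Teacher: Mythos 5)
Your argument is correct, but it follows a genuinely different route from the paper. The paper bounds $\Ravehydro{d}$ by partitioning $[0,\tfrac12]^d$ into three regions (a small ball around the origin handled in polar coordinates with a Gamma-function volume formula, an intermediate shell bounded by measure times supremum, and an outer region handled by a combinatorial sum over Hamming weights of which coordinates exceed $1/\pi$); the dominant $\Theta(1/d)$ contribution there comes from the binomial identity $\sum_l \binom{d}{l}\frac{1}{l}p^{d-l}(1-p)^l \le \frac{C}{d+1}$. You instead linearise the reciprocal via $\frac1a=\int_0^\infty e^{-at}\,\mathrm{d}t$, exploit the product structure of the cube to factor the inner integral into $\phi(t)^d$ with $\phi(t)=\int_0^1 e^{-4t\sin^2(\pi x)}\,\mathrm{d}x$, and then need only two scalar estimates: $\phi(t)\le e^{-t}$ near $0$ (from the quadratic Taylor bound, using $\int\sin^2=\tfrac12$, $\int\sin^4=\tfrac38$) and $\phi(t)\le\tfrac{\sqrt\pi}{4\sqrt t}$ globally (from $\sin(\pi x)\ge 2x$ on $[0,\tfrac12]$ and a Gaussian integral). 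I have checked the individual steps — the Tonelli exchange is legitimate since the integrand is nonnegative and vanishes only on a null set, $1-2t+3t^2\le 1-t$ exactly for $t\le\tfrac13$, the tail integral converges precisely because $d\ge3$ makes $t^{-d/2}$ integrable at infinity, and $\frac{2d}{9(d-2)}\le\frac23$ for $d\ge3$ — and they all hold, giving $\frac1d+\frac3d=\frac4d$. Your heat-kernel factorisation is arguably cleaner and more self-contained (no Gamma-function lower bounds, no combinatorial manipulation), and it makes transparent where the $1/d$ comes from: the exponential decay $\phi(t)^d\le e^{-dt}$ near the origin. The paper's region decomposition, on the other hand, is the version that transfers directly to the Riemann-sum estimates used elsewhere in the proof of Theorem~\ref{thm:d-torus}, which is presumably why the authors chose it.
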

	\begin{proof}
		The lower bound is trivial: the integrand is not smaller than $\frac{1}{4 d}$ over all the domain.
		What follows is devoted to prove the upper bound.
		By symmetry
		\begin{align*}
		   	\Ravehydro{d} = 2^d \int_{ \l[0, \frac{1}{2} \r]^d}  \frac{1}{2 d - 2\sum_{i=1}^d \cos(2\pi x_i)} \ud \xx,
		\end{align*}
	 then we define the following three subsets of $\l[0, \frac{1}{2}\r]^d$, 
		\begin{align*}
			\AAA &= \l\{ \xx \in \l[ 0,\frac{1}{2}\r]^d \quad \textup{s.t.} \quad \Vert \xx \Vert_2 \leq \frac{1}{\pi} \r\}\\
			\BBB &= \l\{ \xx \in \l[ 0,\frac{1}{2}\r]^d \quad \textup{s.t.} \quad \Vert \xx \Vert_2 \geq \frac{1}{\pi} 
								\quad \textup{and}\quad x_i \leq \frac{1}{\pi} \quad \forall i\r\}\\
			\CCC &= \l\{ \xx \in \l[ 0,\frac{1}{2}\r]^d \quad \textup{s.t.}\quad \exists \quad x_i \geq \frac{1}{\pi} \r\}
		\end{align*}
		such that $ A \cup B \cup C = \l[0,\frac{1}{2}\r]^d $.
		Correspondingly, we define 
		\begin{align*}
			\Ok &= 2^d \int_{\AAA}  \frac{1}{2d - 2\sum_{i=1}^d \cos(2\pi x_i)} \ud \xx 
			\\
			\BZk &= 2^d \int_{\BBB} \frac{1}{2d - 2\sum_{i=1}^d \cos(2\pi x_i)} \ud \xx 		
			\\
			\Sk &= 2^d \int_{ \CCC } \frac{1}{2d - 2\sum_{i=1}^d \cos(2\pi x_i)} \ud \xx ,	
		\end{align*}
		so that
		$ \Ravehydro{d} = \Ok + \BZk + \Sk$
		
		We begin by a bound on $\Ok$. 
		First, we work on the denominator of the integrand, using the inequality 
		$1 - \cos x \geq \frac{x^2}{2} - \frac{x^4}{24}$ to show
		\begin{align*} 
			2 \sum_{i=1}^d \left(1  - \cos(2\pi x_i) \right) & \geq
			4 \pi^2 \sum_{i=1}^d  x_i^2 -\frac{16 \pi^4}{12} \sum_{i=1}^d x_i^4 \\
			& \geq 4 \pi^2 \left( \sum_{i=1}^d  x_i^2 -\frac{\pi^2}{3} \sum_{i=1}^d \sum_{j=1}^d x_i^2 x_j^2 \right) \\
			&= 4\pi^2 \left(1 - \frac{\pi^2}{3} \sum_{i=1}^d x_i^2 \right) \sum_{i=1}^d  x_i^2.
		\end{align*}
		 With the last expression, in polar coordinates we obtain
		\begin{align*}
			\Ok \leq & \, 2^d \int_{\AAA }\frac{1}{ 4\pi^2 \left(\sum_{i=1}^d  x_i^2 \right) 
												\left(1 - \frac{\pi^2}{3} \sum_{i=1}^d x_i^2 \right) }  \ud \xx \\
			= &  \int_0^{\frac{1}{\pi}}  \frac{ 2 \pi^{\frac{d}{2}} }{ \Gamma\left(\frac{d}{2}\right) } 
						\rho^{d-1}  \frac{1}{ 4\pi^2 \rho^2 \left(1 - \frac{\pi^2}{3} \rho^2 \right) } \ud \rho\\
			= &  \frac{ \pi^{\frac{d}{2} - 2} }{ 2 \Gamma\left(\frac{d}{2}\right) } 
									\int_0^{\frac{1}{\pi}} \frac{\rho^{d-3}}{1 - \frac{\pi^2}{3} \rho^2 } \ud \rho.
		\end{align*}
		The change of variables involving the Gamma function has cleared the singularity in zero, 
		and the new integrand is an increasing function. Then,
		\begin{align*}
			\Ok \leq & \frac{ \pi^{\frac{d}{2} - 2} }{2\Gamma\left(\frac{d}{2}\right) } 
			\int_0^{\frac{1}{\pi}}  \frac{\left(\frac{1}{\pi}\right)^{d-3}}{  
						\left[1 - \frac{\pi^2}{3} \left(\frac{1}{\pi} \right)  ^2 \right]  }\ud \rho
			=  \frac{3  }{4 \pi^{\frac{d}{2}} \Gamma\left(\frac{d}{2}\right) }.
		\end{align*}
		Since $x^{(1-\gamma)x-1}<\Gamma(x)$ if $x>1$ (see~\cite{ASS:10}),
		where $\gamma\simeq 0.577$ is the Euler-Mascheroni constant, 
		we have
		\begin{align}\label{eq:partial-regionA}
			\Ok \leq \frac{ 3 d }{8 \pi^\frac{d}{2} \left(\frac{d}{2} \right)^{(1-\gamma)\frac{d}{2}}  }.
		\end{align}
		  
		Next, we estimate $\BZk$. Recall definition~\eqref{eq:define-f} and  notice that the function
		$$f(\xx):=\frac{1}{2d - 2\sum_{i=1}^d \cos(2\pi x_i)}$$
		is decreasing in every direction $i$, when $\xx\in[0,\frac12].$
		Then, defining $g(\rho)$ as in~\eqref{eq:g-def}, we have
		\begin{align}\label{eq:partial-regionB}
			\BZk & \leq 2^d \mu(\BBB) g\l(\frac{1}{\pi}\r) \leq \frac{3}{8} \l(\frac{2}{\pi} \r)^d,
		\end{align}
		where $\mu(B)$ denotes the measure of $B$, and $\BBB\subset{\left[0,\frac1\pi\right]^d}.$
		  	
	 Finally, we consider $\Sk$. 
		Let $\Omega = \{0,1\}^d$ and for all $\omega \in \Omega$, define the set  $\CCC_\omega \subset \CCC$ as
		$\CCC_\omega = \{\xx \in \CCC \quad \textup{s.t.} \quad x_i \geq \frac{1}{\pi} \quad\textup{iff}\quad \omega_i=1 \}. $
		Clearly, $\bigcup_{\omega\neq \mathbf{0}} \CCC_\omega = \CCC.$ Then, 
		\begin{align*}
			\Sk 
			&= 2^d \sum_{\omega \neq\mathbf{0}} \int_{ \CCC_{\omega} }  \frac{1}{2d - 2\sum_{i=1}^d \cos(2\pi x_i)} \ud \xx.
		\end{align*}
		For a fixed $\omega\in \Omega$ we denote by $l_\omega$ the number of 1's in $\omega$ 
		(that is, the so-called Hamming weight of $\omega$), and notice that 
		$$\mu( \CCC_{{\omega}} ) = \left(\frac{1}{\pi} \right)^{d-l_\omega} \left( \frac{1}{2} -\frac{1}{\pi} \right)^{l_{\omega}}.$$ 
		Moreover, the function $f(\xx)$ is symmetric under permutations of the components of $\xx$. 
		Then, 
		$$f(\xx)\le f\left( \frac{1}{\pi} \omega\right)=\frac{1}{2(1 - \cos(2)) }\frac{1}{l_\omega}\qquad \text{if}\; \xx\in \CCC_\omega.$$
		Since clearly there are $\binom{d}{l}$ elements in $\Omega$ with Hamming weight $l$, we can argue that 
		\begin{align*}
			\Sk 
			& \le 2^d \sum_{l=1}^d
			\binom{d}{l} \frac{1}{2l(1-\cos(2))}\left(\frac{1}{\pi} \right)^{d-l}\left(\frac{1}{2} - \frac{1}{\pi}\right)^{l}\\
			& = \frac{1}{2(1 - \cos(2))}\sum_{l=1}^d \binom{d}{l} \left(\frac{2}{\pi} \right)^{d-l} 
				\left(1- \frac{2}{\pi} \right)^l \frac{1}{l} \\
			& \leq \frac{1}{(1 - \cos(2))(1-\frac{2}{\pi} )} \frac{1}{d+1}
		\end{align*}
		where the last inequality follows from standard manipulations on the binomials.
		This bound can be replaced by a simpler
		\begin{align}\label{eq:partial-regionC}
			\Sk \leq  \frac{3}{d}
		\end{align}
		and we are able to conclude the proof by combining~\eqref{eq:partial-regionA},
		\eqref{eq:partial-regionB}, and~\eqref{eq:partial-regionC} to get 
		$ \Ravehydro{d} = \Ok + \BZk + \Sk \leq \frac{4}{d} $.
	\end{proof}

\subsection{Bounds for the $d$-torus $\TMd$}

	We proceed with the proof of Theorem~\ref{thm:d-torus}, containing the bounds for $\Rave(\TMd)$ when $d\geq 3$. 
	Notice that, when all the side length are equal to $M$, the general expression (\ref{eq:rave-green}) becomes:
	\begin{align}
		\Raveb{\TMd} = \frac{1}{ M^d }\sum\limits_{\hh\neq \0}
			\frac{1}{ 2d-2\sum\limits_{i=1}^d\cos \l( \frac{2\pi h_i}{M}\r) }  \label{eq:rave-TMd}
	\end{align}

	\medskip
	\noindent
	{\it Proof of Theorem~\ref{thm:d-torus}:}
	 The lower bound can be easily proved by observing
		that $\forall\, \hh \neq \mathbf{0}$, $\frac{1}{\lambda_\hh} \geq \frac{1}{4 d}$.
		Moreover, since $\frac{1}{\lambda_{(1,0,\ldots,0)}} = \frac{1}{2 - 2 \cos(\frac{2\pi}{M})} \geq \frac{1}{2 d} $,
		$$\Raveb{\TMd} \geq \frac{1}{M^d} \l[ (M^d - 2) \frac{1}{4 d} + \frac{2}{4 d} \r] = \frac{1}{4 d}.$$

		In order to prove the upper bound,
		let us consider the terms in the sum (\ref{eq:rave-TMd}) for which $\hh \succ \mathbf{0}$, 
		i.e., those for which all $h_i > 0$. 
		Define
		$$ \Ravebo{\TMd} = \frac{1}{M^{ d}} \sum_{\hh \succ \mathbf{0}} 
		        \frac{1}{ 2 d - 2\sum_{i=1}^{d} \cos\l(\frac{2\pi h_i}{M}\r)} $$
		(where $\hh \succ \mathbf{0}$ means that $h_i > 0$ for all $i$),         
		and observe that
		$$ \Raveb{\TMd} = \sum_{m=1}^d \binom{d}{m} \frac{1}{M^{d-m}} \Ravebo{\TMm}.  $$  
		 It is crucial to observe that, 
		 with $\Ravehydro{m}$ defined at (\ref{eq:ravehydro-d}),
		$$ \Ravebo{\TMm} \leq  \Ravehydro{m}$$ 
		for any $m\ge1$, since we can see $\Ravebo{\TMm}$ as a lower Riemann sum of the integral. 
		 When $m \geq 3$, Lemma~\ref{thm:ravehydro-d} gives $$ \Ravebo{\TMm} \leq \frac{4}{m},$$
while for $m=2$ we use the bound (\ref{eq:Rdotestim2}) on $\Raveoint(\TML)$ from the proof regarding $\TML$.
		For $m=1$, notice that $\Ravebo{\TM} = \Rave(\TM)$, hence we can use (\ref{eq:Reff1exact}).
		We thus obtain
		\begin{align*}
			\Raveb{\TMd} & \leq \binom{d}{1} \frac{1}{M^{d-1}} \frac{M}{ 12  } 
						  +  \binom{d}{2} \frac{1}{M^{d-2}} \l[\frac{1}{2\pi}\log M + 1\r]
						  +  \sum_{m=3}^d \binom{d}{m} \frac{1}{M^{d-m}} \frac{4}{m} \\
					  &   \leq   \frac{4}{M^{d}} \sum_{m=1}^d \binom{d}{m} M^m \frac{1}{m} 
					      +  \frac{d}{4 M^{d-2}} \l[ \frac{1}{3} + \frac{(d-1) \log M}{\pi} \r].
		\end{align*}
		After noting that 
		\begin{align*}
			\frac{4}{M^{d}} \sum_{m=1}^d \binom{d}{m} M^m \frac{1}{m} 
				&\leq \frac{4}{M^{d}} \sum_{m=1}^d \binom{d}{m} M^m \frac{2}{m+1} \\
				& \leq \frac{8}{M^{d+1}} \sum_{m=1}^d \binom{d+1}{m+1}  \frac{M^{m+1}}{d+1} \\
				& \leq \frac{8}{d+1} \frac{1}{M^{d+1}} \sum_{n=0}^{d+1} \binom{d+1}{n}  {M^{n}} \\
				&= \frac{8}{d+1}\l(1 + \frac{1}{M} \r)^{d+1},
		\end{align*}
		the thesis follows immediately.
		\hfill$\square$	

\subsection{Analysis for the hypercube $\Hyp{d}$}\label{sect:hypercube} 
		The eigenvalues\footnote{ Note that these eigenvalues cannot be computed using (\ref{eq:eigenvalues}) with $M=2$ because $\Hyp{d}$ is a degenerate case of $T_{2^d}$.}	 of the hypercube $\Hyp{d}$ are
	$ \lambda_{m} = 2 m$ for $m\in \fromto{0}{d}$,  where the eigenvalue 
	$\lambda_m$ has multiplicity $\binom{d}{m} = \frac{d!}{m!(d-m)!}$.
	%
	We thus obtain that
	\begin{align*}
		\Raveb{\Hyp{d}} = \frac{1}{2^d} \sum_{m=1}^d \frac{1}{2m} \binom{d}{m}.
	\end{align*}

	\smallskip
	\noindent
	{\it Proof of Theorem~\ref{thm:hypercube}:}
		For the lower bound, we have:
		\begin{align*}
			\Raveb{\Hyp{d}} 
			& \geq \frac{1}{2^{d+1}} \sum_{m=1}^d \frac{1}{m+1} \binom{d}{m} \\
			&= \frac{1}{2^{d+1}} \sum_{m=1}^d \frac{1}{d+1} \binom{d+1}{m+1}
		\end{align*}
		By the change of variables $m' = m+1$ and $d'=d+1$, we compute 
		$\sum_{m=1}^{d} \binom{d+1}{m+1} = 2^{d+1} -d-2$ and conclude that 
		\begin{equation}\label{eq:lower-Hd}
			\Raveb{\Hyp{d}}\ge \left(1-\frac{d+2}{2^{d+1}} \right)\frac1{d+1}.
		\end{equation}
		%
		For the corresponding upper bound we have:
		\begin{align*}
		\Raveb{\Hyp{d}} 
		            &\leq \frac{1}{2^{d+1}} \sum_{m=1}^d \frac{2}{m+1} \binom{d}{m}\\ 
		            &= \frac{1}{2^{d+1}} \sum_{m=1}^d \frac{2}{d+1} \binom{d+1}{m+1} \leq \frac{2}{d+1}.
		\end{align*}
		\hfill$\square$	
		
	\smallskip
	\noindent	{\it Proof of~\eqref{ReffHypAsy}:}
		 In order to prove the asymptotic trend~\eqref{ReffHypAsy},
		from the definition of $\Raveb{\Hyp{d}}$ and using Pascal's rule we compute:
		\begin{align*}
			\Raveb{\Hyp{d}}
				=&	\frac12 \Raveb{\Hyp{d-1}} + \frac1{2^{d+1}} \frac1d \sum_{k=1}^{d} \binom{d}{k}  \\
				=&	\frac12 \Raveb{\Hyp{d-1}} + \frac1{2d}\l(1-\frac1{2^d}\r)
			\end{align*}
			We have thus shown that the sequence $\Raveb{\Hyp{d}}$ can be constructed recursively by 
			the above formula and defining $\Raveb{\Hyp{0}}=0$. 
			This recursion implies that 
			\begin{align*}
				\Raveb{\Hyp{d}}  	
							=&	\sum_{i=1}^d \frac1{2^{d-i}} \frac1{2i} \l(1-\frac1{2^i}\r)\\
							=&	\sum_{i=1}^d \frac1{2^{d+1}}\frac{2^i-1}{i}
			\end{align*}
			Consequently,
$\displaystyle				\Raveb{\Hyp{d}}\le\frac1{2^{d+1}}\sum_{i=1}^d  \frac{2^i}{i}$
			and we claim that 
			\begin{equation}\label{eq:upper-limit}
				\lim_{d\to+\infty} \frac{\frac1{2^{d+1}}\sum_{i=1}^d  \frac{2^i}{i}}{\frac1d}=1.
			\end{equation}
			This fact can be shown true as follows. Let $a_d=\frac{d}{2^{d+1}}\sum_{i=1}^d\frac{2^i}i.$ 
			Then, it is immediate to verify that $a_d$ satisfies the following recursion
			$$ \begin{cases}
								a_0=0\\
								a_{d+1}=\frac12 \l(1+\frac1d\r)a_{d}+\frac12 \quad \text{for $d\ge0$}
			\end{cases} $$
			and --by induction-- that 
			if $d\ge3$, then $a_d>1$, and
			if $d\ge5$, then $a_{d+1}<a_{d}$.
			Then, $a_d$ must have a finite limit $\ell\ge 1$. Also,
			note that
			$$ a_{d+1}=\frac12 \l(1+\frac1d\r)a_{d}+\frac12 \le \frac12 a_d + \frac43 \frac1d +\frac12.$$
			By taking the limit on both sides of the inequality, we obtain that 
			$\ell\le 1.$ 
			Finally, the desired~\eqref{ReffHypAsy} follows by combining Equations~\eqref{eq:lower-Hd} and~\eqref{eq:upper-limit}. 		\hfill$\square$	

\section{Conclusion}\label{sec:concl}
	The average effective resistance of a graph is an important performance index in several problems of distributed control and estimation, where toroidal grid graphs are exemplary $d$-dimensional graphs.
In these graphs, the asymptotical dependence of the average effective resistance on the network size is well-known, but limited information was available about the constants involved in such relations and about the dependence on the dimension $d$.
	
	We have expressed the average effective resistance of a graph in term of a sum of the inverse Laplacian eigenvalues and found new estimates of this quantity: these estimates are key to our refined asymptotic analysis. For bidimensional toroidal grids, we have identified the proportionality constant of the leading term and we have studied the case when the grid sides have unequal lengths. In grids with $d \geq 3$ and equal side lengths, we conjectured that the average effective resistance is inversely proportional to the dimension $d$. This conjecture is supported by numerical evidences and by several partial results.
	
	Our results have been derived for toroidal grids, but we believe that they provide more general insights about the role of graph dimension is network estimation problems. Indeed, scaling properties deduced on toroidal grid graphs can typically be extended, with due care, to less structured graphs: works in this direction include~\cite{PB-JPH:07,PB-JPH:09,EL-FG-SZ:13,EL-SZ:12}. We envisage that our results on high-dimensional graphs can undergo similar extensions and thus cover more realistic networks in engineering and social sciences.
%
%
%
%
%
\bibliographystyle{plain}

\end{document}